\newcommand{\df}{\dfrac}
\newcommand{\tf}{\tfrac}
\renewcommand{\i}{\infty}
\newcommand{\beqs}{\begin{equation*}}
\newcommand{\eeqs}{\end{equation*}}
\numberwithin{equation}{section}
 \theoremstyle{plain}
\newtheorem{theorem}{Theorem}[section]
\newtheorem{lemma}[theorem]{Lemma}
\newtheorem{corollary}[theorem]{Corollary}
\theoremstyle{remark}
\newcommand{\noi}{\noindent}
\begin{document}

\makeatletter
\def\imod#1{\allowbreak\mkern10mu({\operator@font mod}\,\,#1)}
\makeatother

\author{Alexander Berkovich}
\address{Department of Mathematics, University of Florida, 496 Little Hall, Gainesville, FL 32611, USA}
\email{alexb@ufl.edu}

\title[\scalebox{.9}{ On the Gauss \emph{E{$\Upsilon$}{PHKA}} theorem and  some allied inequalities}]{ On the Gauss \emph{E{$\Upsilon$}{PHKA}} theorem and  some allied inequalities}

\thanks{}
     
\begin{abstract} 

We use the $1907$ Hurwitz formula along with the Jacobi triple product identity to understand representation properties of two JP (Jones-Pall) forms of Kaplansky:  $9x^2+ 16y^2 +36z^2 + 16yz+ 4xz + 8xy$ and $9x^2+ 17y^2 +32z^2 -8yz+ 8xz + 6xy$. We also  discuss three nontrivial analogues of the Gauss  \emph{E{$\Upsilon$}{PHKA} }theorem.  The technique used can be applied to all known spinor regular ternary  quadratic forms.

\end{abstract}

\dedicatory{}   
   
\keywords{  $\theta$-function identities, representation properties, spinor regular ternary quadratic forms, $1907$ Hurwitz formula}
  
\subjclass[2010]{11B65, 11E12, 11E16, 11E20, 11E25, 11E41, 11F37}
		
\date{\today}
   
\maketitle
   
\section{Introduction}
\label{intro}

We use notation $(a,b,c,d,e,f)$ to represent a positive ternary quadratic form $ax^2+by^2+cz^2+dyz+exz+fxy$. 
We remark that this paper only considers positive ternary quadratic forms. 
We use $(a,b,c,d,e,f;n)$ to denote the total number of representations of $n$ by $(a,b,c,d,e,f)$, 
and take $(a,b,c,d,e,f;n)=0$ when $n \not\in \mathbb{N}$. 
The associated theta series to the form $(a,b,c,d,e,f)$ is
\begin{equation}
\label{thet}
\vartheta(a,b,c,d,e,f,q):=\sum_{x,y,z}q^{ax^2 +by^2 +cz^2 +dyz +exz +fxy} = \sum_{n\geq 0}(a,b,c,d,e,f;n)q^n.
\end{equation}
Here and throughout the paper we assume that $q$ is a complex number with $|q|<1$.
The discriminant $\Delta$ of $(a,b,c,d,e,f)$ is defined as
\[
\Delta:= \frac{1}{2} \mbox{det}\left( \begin{array}{lll}
 2a& f &e  \\
f & 2b &d  \\
 e&d  &2c  \\\end{array} \right) = 4abc +def -ad^2 -be^2 -cf^2.
\]
Note that the discriminant $\Delta>0$ for a positive ternary quadratic form.
Two ternary quadratic forms of discriminant $\Delta$ are in the same genus if they are equivalent 
over $\mathbb{Q}$ via a transformation matrix in $SL(3,\mathbb{Q})$ whose entries have denominators coprime to $2\Delta$.

The following result is used more often than any other in research in the theory of numbers.
\begin{theorem}[Three-square theorem]
\label{LG0}
Let $n,m$ and $v$ be non-negative integers. Then
\begin{equation}
\label{lg0}
(1,1,1,0,0,0;n)\geq0,
\end{equation}  
with equality if and only if $n= 4^v(8m+7)$.
\end{theorem}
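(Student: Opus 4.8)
The inequality $(1,1,1,0,0,0;n)\ge 0$ is immediate, since by definition $(1,1,1,0,0,0;n)$ counts representations and so is a nonnegative integer; the entire content lies in the equality condition. The plan is therefore to prove the two implications separately: first that every $n=4^v(8m+7)$ is unrepresented, and then the harder converse that every other $n$ is a sum of three squares.

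For the necessity I would argue by $2$-adic descent. Since $x^2\equiv 0,1,4\pmod 8$ for every integer $x$, a sum $x^2+y^2+z^2$ can never be $\equiv 7\pmod 8$; this settles the base case $v=0$. For the inductive step, suppose $4\mid n$ and $n=x^2+y^2+z^2$. Reducing modulo $4$ and using $x^2\equiv 0,1\pmod 4$ forces $x,y,z$ all even, whence $n/4=(x/2)^2+(y/2)^2+(z/2)^2$. Thus, when $4\mid n$, one has $(1,1,1,0,0,0;n)=0$ exactly when $(1,1,1,0,0,0;n/4)=0$, and iterating reduces any $n=4^v(8m+7)$ to $8m+7$, which is unrepresented by the base case.

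For the sufficiency---the deep direction---I would invoke the theory of the genus of $(1,1,1,0,0,0)$, whose discriminant is $\Delta=4$. Two facts drive the argument: (i) this form is alone in its genus, i.e.\ the genus contains a \emph{single} equivalence class, so that $n$ is represented by $(1,1,1,0,0,0)$ if and only if it is represented by the genus; and (ii) by the local--global principle, the genus represents $n$ precisely when $n$ is represented over $\mathbb{Z}_p$ for every prime $p$ and over $\mathbb{R}$. The real condition is merely $n\ge 0$. For every odd prime $p$ the form $x^2+y^2+z^2$ is nondegenerate ternary over $\mathbb{F}_p$, hence isotropic, and a Hensel lift shows it represents every $p$-adic integer, so no odd prime contributes an obstruction. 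The only genuine local constraint comes from $p=2$, and a direct $2$-adic computation shows it excludes exactly the integers $4^v(8m+7)$---precisely the $n$ ruled out in the first part.

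The main obstacle is the sufficiency direction, and within it the single-class together with the local--global input: verifying that the genus of $x^2+y^2+z^2$ has class number one and that everywhere-local representability implies global representability. Once these structural facts are in hand---which is exactly the point where the $\theta$-function identities and the Hurwitz-type formula developed later in the paper can supply a self-contained substitute for the general genus machinery---the two elementary congruence computations above close the proof.
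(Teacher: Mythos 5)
The paper does not actually prove this theorem: it states it and cites Legendre and Gauss, treating it as classical input for everything that follows. Your sketch is therefore not ``the same approach as the paper'' so much as a proof where the paper offers none, and as a sketch of the standard modern argument it is sound. The necessity direction is complete and correct: $x^2\equiv 0,1,4\pmod 8$ rules out $n\equiv 7\pmod 8$, and reduction mod $4$ forces $x,y,z$ even when $4\mid n$, so the descent to $n/4$ is legitimate and handles all $n=4^v(8m+7)$. The sufficiency direction correctly isolates the two deep inputs --- that the genus of $x^2+y^2+z^2$ has class number one, and that everywhere-local representability implies representability by some form in the genus --- and your local analysis (isotropy at odd $p$ via Chevalley--Warning plus Hensel, the $2$-adic computation isolating exactly $4^v(8m+7)$) is the right way to extract the congruence conditions.

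One concrete caution about your closing remark: the suggestion that the $\theta$-function identities and the Hurwitz-type formula developed later in the paper ``can supply a self-contained substitute for the general genus machinery'' is backwards. The paper uses Theorem \ref{LG0} as an \emph{input} --- for instance, the positivity in the proof of Theorem \ref{b11} when $8n+1$ is not a square is justified by citing \eqref{lg0} --- and the Hurwitz formula \eqref{h1} only evaluates $(1,1,1,0,0,0;n^2)$ for perfect squares, so it cannot certify representability of general $n$. Deriving the structural facts from that later material would be both circular and insufficient. If you want the sufficiency direction to stand on its own, you must import the genus-theoretic facts from the classical literature (as the paper implicitly does by citing Legendre and Gauss), or replace them by one of the standard self-contained proofs (Dirichlet's via primes in arithmetic progressions, or the lattice/geometry-of-numbers argument).
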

\noi
The proof was given by Legendre in \cite{legen} and by Gauss in \cite{gauss}.
Using this result, Gauss proved his celebrated \emph{E{$\Upsilon$}{PHKA}} theorem. According to this  theorem, every natural number is a sum of three  triangular
numbers. This is equivalent to the statement that every natural number of the form  $8m+3$ is a sum of three odd squares.
\begin{theorem}[Gauss]
\label{g0}
Let $n$ be a non-negative integer. Then
\begin{equation}
(1,1,1,0,0,0; 8n+3)=[q^{8n+3}]\sum_{\substack{x\equiv1\imod{2},\\
                y\equiv1\imod{2},\\
							  z\equiv1\imod{2}}} q^{x^2+y^2+z^2} >0.
\end{equation}
\end{theorem}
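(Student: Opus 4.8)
The plan is to establish the claimed identity and the strict positivity separately, with the Three-square theorem (Theorem \ref{LG0}) serving as the only nontrivial input. The displayed equation in Theorem \ref{g0} asserts two things: that the total count $(1,1,1,0,0,0;8n+3)$ equals the count of representations of $8n+3$ by three \emph{odd} squares, and that this common value is strictly positive. I would dispose of the identity by a purely local (modulo $8$) argument, and then obtain positivity by checking that $8n+3$ never falls into the exceptional family of the Three-square theorem.

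For the identity, I would argue as follows. The square of an integer reduces modulo $8$ to one of $0,1,4$: odd squares are $\equiv 1\imod{8}$, while even squares lie in $\{0,4\}\imod{8}$. Enumerating the sums $a+b+c$ with $a,b,c\in\{0,1,4\}$ modulo $8$, one finds that the residue $3\imod{8}$ is attained \emph{only} by the combination $1+1+1$. Consequently, in every representation $x^2+y^2+z^2=8n+3$ each of $x,y,z$ is forced to be odd, without exception. This shows that the unrestricted count $(1,1,1,0,0,0;8n+3)$ and the count restricted to odd $x,y,z$ agree coefficientwise, which is exactly the stated identity.

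It remains to prove positivity, and here the Three-square theorem does all the work. By Theorem \ref{LG0}, $(1,1,1,0,0,0;N)=0$ precisely when $N=4^v(8m+7)$ for some non-negative integers $v,m$. Since $8n+3$ is odd we must have $v=0$, which would force $8n+3=8m+7$, i.e.\ $3\equiv 7\imod{8}$ --- impossible. Hence $8n+3$ avoids the exceptional family entirely, so $(1,1,1,0,0,0;8n+3)>0$, completing the proof. The only genuine obstacle here is the Three-square theorem itself; granting it, the remaining steps amount to elementary residue bookkeeping, and none of the analytic machinery (the Jacobi triple product, the $1907$ Hurwitz formula) is required for this classical case --- those tools are instead reserved for the harder ternary forms treated later in the paper.
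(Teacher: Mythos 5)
Your proof is correct, and it is exactly the classical argument the paper itself relies on: the paper states Theorem \ref{g0} without proof, having just noted that it is equivalent to every $8n+3$ being a sum of three odd squares and that it follows from the Three-square theorem. Your two steps --- the residue analysis modulo $8$ showing that $3 \pmod{8}$ forces all three squares to be odd, and the observation that $8n+3$ is never of the form $4^v(8m+7)$ --- supply precisely the bookkeeping the paper leaves implicit, so there is nothing to add.
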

\noi
Here and throughout the paper $[q^n]F(q)$ denotes the coefficient of $q^n$ extracted from $F(q)$ (when written as a Maclaurin series).

Using the Three-square theorem  along with two easily verifiable identities
\begin{equation}
\label{kap1a}
(1,1,2,0,0,0;2n)=(1,1,1,0,0,0;n), n\in \mathbb{N}, 
\end{equation}
\begin{equation}
\label{kap1b}
3(1,1,2,0,0,0;2n+1)= (1,1,1,0,0,0;4n+2), n\in \mathbb{N}, 
\end{equation}
we can prove the following: 
\begin{theorem}
\label{Dick86}
Let $n,m$ and $v$ be non-negative integers. Then
\begin{equation}
\label{dick86}
(1,1,2,0,0,0;n)\geq0,
\end{equation}  
with equality if and only if $n=4^v2(8m+7)$. 
\end{theorem}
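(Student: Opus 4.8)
The plan is to deduce everything from the Three-square theorem (Theorem~\ref{LG0}) by feeding it through the two identities \eqref{kap1a} and \eqref{kap1b}, organized according to the parity of $n$. Since $(1,1,2,0,0,0;n)$ counts representations, the inequality \eqref{dick86} is automatic; the entire content lies in showing that this representation number vanishes exactly on the set $\{2\cdot 4^v(8m+7)\}$.

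First I would treat even $n$. Writing $n=2k$, identity \eqref{kap1a} gives $(1,1,2,0,0,0;2k)=(1,1,1,0,0,0;k)$. By Theorem~\ref{LG0} the right-hand side is zero precisely when $k=4^v(8m+7)$, that is, when $n=2\cdot 4^v(8m+7)$. This already accounts for the whole excluded family and shows that no other even $n$ is omitted.

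Next I would treat odd $n$. Writing $n=2k+1$, identity \eqref{kap1b} gives $3(1,1,2,0,0,0;2k+1)=(1,1,1,0,0,0;4k+2)$, so it suffices to verify that $(1,1,1,0,0,0;4k+2)$ never vanishes. Here I would invoke a $2$-adic valuation argument: the integer $4k+2$ has $2$-adic valuation exactly $1$, whereas every member $4^v(8m+7)$ of the exceptional set of Theorem~\ref{LG0} has even $2$-adic valuation (namely $0$ when $v=0$, since $8m+7$ is odd, and $2v\geq 2$ when $v\geq 1$). Hence $4k+2$ can never have the excluded shape, so the right-hand side is positive, and therefore $(1,1,2,0,0,0;2k+1)>0$ for every odd $n$.

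Combining the two cases, the zeros of $(1,1,2,0,0,0;n)$ are exactly the integers $2\cdot 4^v(8m+7)$, which is precisely the assertion of \eqref{dick86}. The argument is essentially bookkeeping once the identities are in hand; the only point requiring a moment's care is the parity/valuation step showing that twice an odd number can never coincide with $4^v(8m+7)$, which is what eliminates any exceptional odd $n$. I do not anticipate any genuine obstacle beyond checking the two starting identities \eqref{kap1a}--\eqref{kap1b}, which the text already flags as easily verifiable.
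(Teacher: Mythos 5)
Your argument is correct and is exactly the route the paper intends: it states Theorem~\ref{Dick86} as a direct consequence of the Three-square theorem fed through \eqref{kap1a} (even case) and \eqref{kap1b} (odd case), leaving the details unwritten. Your parity split and the $2$-adic valuation observation that $2(2k+1)$ can never equal $4^v(8m+7)$ fill in precisely those details.
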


\noi
This result is, essentially,  Dickson's Theorem $86$ in \cite{dickson}. Theorem $89$ in \cite{dickson} can be stated as  
\begin{theorem}
\label{Dick89}
Let $n,m$ and $v$ be non-negative integers. Then
\begin{equation}
\label{dick89}
(1,1,3,0,0,0;n)\geq0,
\end{equation}  
with equality if and only if $n=9^v3(3m+2)$. 
\end{theorem}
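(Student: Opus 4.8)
The plan is to study $(1,1,3,0,0,0;n)$ via the one-variable peeling
\[
(1,1,3,0,0,0;n)=\sum_{z\in\mathbb{Z}} r_2(n-3z^2),
\]
where $r_2(m)$ is the number of representations of $m$ as a sum of two integer squares (and $r_2(m)=0$ for $m<0$). I will lean on two classical facts: the two-square theorem, namely $r_2(m)>0$ iff $m\ge 0$ and every prime $p\equiv 3\pmod 4$ divides $m$ to an even power; and the identity $r_2(9m)=r_2(m)$, which holds because adjoining two factors of the prime $3\equiv 3\pmod 4$ leaves the relevant alternating divisor sum unchanged.

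I would first settle the vanishing (``only if'') direction with two easily verifiable identities, in the spirit of (\ref{kap1a})--(\ref{kap1b}). For $3\nmid z$ one has $9n-3z^2=3(3n-z^2)$ with $3n-z^2\equiv 2\pmod 3$, so this value carries an odd power of $3$ and contributes nothing; the terms $z=3w$ give $r_2(9(n-3w^2))=r_2(n-3w^2)$, and summing yields the scaling identity $(1,1,3,0,0,0;9n)=(1,1,3,0,0,0;n)$, which explains the factor $9^v$. Next, if $n=3t$ with $t\equiv 2\pmod 3$, then $t-z^2\equiv 1$ or $2\pmod 3$ for every $z$, so each $3(t-z^2)$ again carries an odd power of $3$ and is never a sum of two squares; hence $(1,1,3,0,0,0;3t)=0$. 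Combining these gives $(1,1,3,0,0,0;9^v3(3m+2))=0$.

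For the positive direction, the scaling identity reduces matters to $n$ with $v_3(n)\le 1$; concretely I must show $(1,1,3,0,0,0;n)>0$ whenever $3\nmid n$, and whenever $n=3t$ with $3\nmid t$ and $t\equiv 1\pmod 3$. Here the Three-square theorem (Theorem \ref{LG0}) enters: the obstruction to $x^2+y^2+3z^2$ is purely $3$-adic, the form being locally universal at $2$ and at every odd prime $p\neq 3$, while at $3$ solvability fails exactly for $n=9^v3(3m+2)$. Since $x^2+y^2+3z^2$ is regular (its genus contains a single class), local representability at every prime forces global representability, and the three-square theorem supplies exactly the representability away from $3$.

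The main obstacle is this positive direction, and it is genuinely harder than its analogue for $(1,1,2,0,0,0)$. There the relation $r_2(2m)=r_2(m)$ makes the $2$-form a mere relabelling of sums of three squares, so (\ref{kap1a})--(\ref{kap1b}) together with Theorem \ref{LG0} finish the proof outright. For the $3$-form no such clean proportionality exists, since $r_2(3m)=0$ whenever $3\nmid m$; indeed $x^2+y^2+3z^2$ is not a sum of three squares of integral linear forms, as $\det\operatorname{diag}(1,1,3)=3$ is not a perfect square. I therefore expect the crux to be verifying the single-class (regularity) property of $x^2+y^2+3z^2$---a finite reduction of the discriminant-$12$ forms---after which positivity for every non-excluded $n$ follows from Theorem \ref{LG0}.
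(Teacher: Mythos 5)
Your treatment of the vanishing direction is complete and correct: the peeling $(1,1,3,0,0,0;n)=\sum_{z}r_2(n-3z^2)$, the observation that in the relevant cases $3(3n-z^2)$ and $3(t-z^2)$ carry the prime $3\equiv 3\pmod{4}$ to an odd power, and the resulting scaling identity $(1,1,3,0,0,0;9n)=(1,1,3,0,0,0;n)$ together give $(1,1,3,0,0,0;9^v3(3m+2))=0$ by entirely elementary means. Bear in mind that the paper itself offers no proof of this statement: it quotes it as Theorem $89$ of \cite{dickson}, in contrast with the companion Theorem \ref{Dick86}, for which the identities \eqref{kap1a}--\eqref{kap1b} reduce everything to the Three-square theorem. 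You are right that no such pair of identities is available for $(1,1,3,0,0,0)$, and your determinant remark correctly explains why the $(1,1,2,0,0,0)$ trick has no analogue here.

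The positivity direction, however, is where the entire difficulty lives, and your proposal does not close it. You reduce to showing $(1,1,3,0,0,0;n)>0$ when $3\nmid n$ or when $n=3t$ with $t\equiv1\pmod{3}$, and then invoke (i) local universality of $x^2+y^2+3z^2$ at $2$ and at every prime $p\neq 3$, (ii) the exact local obstruction at $3$, (iii) the claim that the genus of $x^2+y^2+3z^2$ consists of a single class, and (iv) the principle that a one-class genus represents every integer it represents locally. Items (i) and (ii) are routine $p$-adic computations you could supply, but (iii) is asserted rather than verified (it is a finite reduction-theory check for discriminant $12$), and (iv) is itself a substantive theorem (Minkowski--Siegel, or Dickson's ad hoc descent arguments for small discriminants) that cannot simply be bundled into the word ``regular''. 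Moreover, the sentence claiming that ``the three-square theorem supplies exactly the representability away from $3$'' is not a valid step: Theorem \ref{LG0} concerns a different form whose local obstruction sits at $p=2$, where $x^2+y^2+3z^2$ has no obstruction at all, so \eqref{lg0} contributes nothing to the positivity half. As written, your argument proves only the ``only if'' half; the ``if'' half still rests on precisely the unproved kernel that the citation to Dickson's Theorem $89$ is meant to cover.
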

\noi
In his $1907$ letter, Hurwitz \cite{hur} proposed that $(1,1,1,0,0,0;n^2)$ may be expressed as a simple finite function of the divisors of $n$. 
In fact, if $n$ is a positive integer with prime factorization
\begin{equation}
n= 2^a\prod_{p>2}p^v.
\end{equation}
Then
\begin{equation}
\label{h1}
\frac{(1,1,1,0,0,0;n^2)}{6}= \prod_{p>2}\left( \frac{1-p^{v+1}}{1-p} -\left(\df{-1}{p}\right)\frac{ 1-p^v}{1-p}\right). 
\end{equation}

\noi
Here and throughout the paper $ \left(\df{a}{n}\right)$ denotes the Jacobi symbol.
A proof of this formula, using integral quaternions, has been given by Pall \cite{pall}.
A completely elementary arithmetical proof, based on the methods of Liouville, was given by Olds \cite{olds}.
Recently, explicit analogues of Hurwitz's formula for many diagonal ternary quadratic forms of class number one 
were established in \cite{cooperlam} and \cite{guo}. 
\begin{theorem}
Let $n$ be a positive integer with prime factorization
\begin{equation}
n= 2^a\prod_{p>2}p^v.
\end{equation}
Then
\begin{equation}
\label{h2}
\frac{(1,1,2,0,0,0;n^2)}{4}= f(a)\prod_{p>2}\left( \frac{1-p^{v+1}}{1-p} -\left(\df{-2}{p}\right)\frac{ 1-p^v}{1-p}\right),
\end{equation}
where $f(0) =1$ and $f(a)=3$ when $a\geq1$. 
\end{theorem}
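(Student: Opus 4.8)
The plan is to reduce everything to the odd case using the two displayed identities, and then to attack that case by the theta-function method advertised in the abstract. For brevity write $R(m):=(1,1,1,0,0,0;m)$ and $S(m):=(1,1,2,0,0,0;m)$, and set $\varphi(q):=\sum_{x}q^{x^2}$, so that by \eqref{thet} one has $\vartheta(1,1,2,0,0,0,q)=\varphi(q)^2\varphi(q^2)$. Write $n=2^au$ with $u$ odd. First I would peel off the power of $2$ using only \eqref{kap1a}, \eqref{kap1b} and the elementary fact $R(4m)=R(m)$ (if $x^2+y^2+z^2\equiv0\imod4$ then $x,y,z$ are all even, which sets up a bijection with the representations of $m$). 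If $a=0$ there is nothing to do. If $a\geq1$, then $n^2=2^{2a}u^2$ is even, so \eqref{kap1a} gives $S(n^2)=R(n^2/2)=R(2^{2a-1}u^2)$; stripping $a-1$ factors of $4$ leaves $R(2u^2)$, and \eqref{kap1b} (with $2N+1=u^2$) gives $R(2u^2)=3\,S(u^2)$. Thus in all cases $S(n^2)=f(a)\,S(u^2)$ with $f(0)=1$ and $f(a)=3$ for $a\geq1$, so the prefactor $f(a)$ emerges automatically and \eqref{h2} is reduced to its own odd case
\begin{equation*}
\frac{S(u^2)}{4}=\prod_{p>2}\left(\frac{1-p^{v+1}}{1-p}-\left(\df{-2}{p}\right)\frac{1-p^v}{1-p}\right),\qquad u=\prod_{p>2}p^v\text{ odd}.
\end{equation*}

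The crux is this odd-case identity, and here I would exploit that $(1,1,2,0,0,0)$ has discriminant $8$ and is the unique class in its genus. By the Minkowski--Siegel theory a single-class genus theta series equals its own Eisenstein series, so $\varphi(q)^2\varphi(q^2)$ should coincide with an explicit series $1+\sum_{m\geq1}g(m)q^m$ whose coefficients $g(m)=S(m)$ are given by the singular series, i.e.\ a product of local densities carrying the genus character $\left(\df{-2}{\cdot}\right)$ modulo $8$. I would make this representation explicit and elementary through a Jacobi triple product dissection of $\varphi(q)^2\varphi(q^2)$ into residue classes, matching it against the analogous expansion of $\varphi(q)^3$ that underlies \eqref{h1}; the identities \eqref{kap1a} and \eqref{kap1b} are themselves consequences of this dissection and can be used to transport the known Eisenstein data for $\varphi(q)^3$ onto the new form.

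Once the expansion $g=S$ is in hand, the remaining step is to evaluate it at an odd square. Since the singular series is a product of local densities it is multiplicative over the odd primes dividing $u$, so $S(u^2)$ factors over those primes; at a prime power $p^{2v}$ the twisted local sum telescopes to exactly $\tfrac{1-p^{v+1}}{1-p}-\left(\df{-2}{p}\right)\tfrac{1-p^v}{1-p}$, in complete parallel with Hurwitz's evaluation for \eqref{h1}. I expect the genuine obstacle to be the first half of the crux, namely pinning down the exact Eisenstein/Lambert identity for $\varphi(q)^2\varphi(q^2)$ via the triple product and confirming that no cusp form contributes; the $2$-adic bookkeeping and the final prime-power computation are mechanical. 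As a preliminary sanity check I would verify the product formula against the small values $R(2)=12$, $R(18)=36$ and $R(50)=84$ (equivalently $S(1)=4$, $S(9)=12$, $S(25)=28$) before committing to the general argument.
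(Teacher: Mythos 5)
Your overall strategy is sound, and for the essential step it coincides with what the paper actually does: the paper offers no written proof of \eqref{h2} at all, but simply remarks that all such Hurwitz-type formulas for one-class genera ``easily follow from the Siegel formula'' and points to the references of Cooper--Lam and Guo--Peng--Qin. Your appeal to the single-class genus of $(1,1,2,0,0,0)$ (discriminant $8$), the identification of $\varphi(q)^2\varphi(q^2)$ with its Eisenstein series (the cusp space of weight $3/2$ on $\Gamma_0(8)$ is trivial, so nothing else can contribute), and the multiplicative evaluation of the singular series at $p^{2v}$ is exactly that route. What you add, and what the paper does not spell out, is the elementary $2$-adic reduction: using \eqref{kap1a}, the bijection behind $(1,1,1,0,0,0;4m)=(1,1,1,0,0,0;m)$, and \eqref{kap1b} to get $(1,1,2,0,0,0;n^2)=f(a)\,(1,1,2,0,0,0;u^2)$ with $n=2^au$. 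That reduction is correct and complete as you state it, and it buys you the prefactor $f(a)$ without computing any $2$-adic densities; your numerical checks $S(1)=4$, $S(9)=12$, $S(25)=28$ are also right.

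One soft spot to be aware of: the suggestion that \eqref{kap1a}--\eqref{kap1b} let you ``transport the known Eisenstein data for $\varphi(q)^3$'' onto the new form does not close the odd case. Those identities convert $(1,1,2,0,0,0;u^2)$ into $\tfrac{1}{3}(1,1,1,0,0,0;2u^2)$, but \eqref{h1} evaluates $(1,1,1,0,0,0;\cdot)$ only at perfect squares, and $2u^2$ is not one; a formula for $r_3$ on the square class $2u^2$ (equivalently, for the class number $H(8u^2)$) is an independent input of the same depth as \eqref{h2} itself. So the crux genuinely requires Siegel's formula (or the classical Gauss class-number evaluation) applied to the genus of $x^2+y^2+2z^2$ with the character $\left(\df{-2}{\cdot}\right)$; the Jacobi-triple-product dissection you mention is not the mechanism that produces the divisor sum. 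With that understood, your outline is a correct and somewhat more self-contained packaging of the argument the paper merely cites.
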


\begin{theorem}
Let $n$ be a positive integer with prime factorization
\begin{equation}
n= 2^a 3^b\prod_{p>3}p^v.
\end{equation}
Then
\begin{equation}
\label{h3}
\frac{(1,1,3,0,0,0;n^2)}{4}= (2^{a+1}-1)\prod_{p>3}\left( \frac{1-p^{v+1}}{1-p} -\left(\df{-3}{p}\right)\frac{ 1-p^v}{1-p}\right).
\end{equation}
\end{theorem}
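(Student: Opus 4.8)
The plan is to analyze $\Phi(q):=\sum_{n\ge1}(1,1,3,0,0,0;n^2)q^n$, the restriction of $\vartheta(1,1,3,0,0,0,q)=\varphi(q)^2\varphi(q^3)$ (where $\varphi(q)=\sum_{k}q^{k^2}$) to square exponents, and to show that its coefficients are multiplicative with exactly the Euler product displayed on the right of \eqref{h3}. Conceptually, $\vartheta(1,1,3,0,0,0,q)$ is a modular form of weight $3/2$ on a suitable $\Gamma_0(N)$ with quadratic nebentypus; because $(1,1,3,0,0,0)$ is alone in its genus, Siegel's theorem forces this theta series to coincide with the Eisenstein (genus) theta series, so $(1,1,3,0,0,0;m)$ equals the product of local representation densities and is therefore multiplicative in the prime-power parts of $m$. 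Passing to $m=n^2$ converts these densities, through the Shimura correspondence, into weight-$2$ Euler factors, exactly paralleling the passage that produces Hurwitz's formula \eqref{h1} for three squares.

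Concretely, I would proceed in three steps. (i) Establish the Euler product for $(1,1,3,0,0,0;n^2)/4$, with the primes $2$, $3$, and $p>3$ contributing independent local factors; the multiplicativity follows from the single-class structure above, and the most hands-on way to make it explicit is to put the relevant theta quotients into product form via the Jacobi triple product identity and thereby rewrite $\Phi(q)$ as a Lambert-type series. (ii) Compute the three kinds of local factors. For $p>3$ the prime is unramified in $\Delta=12$, the local density is controlled by $\left(\df{-3}{p}\right)$, and the computation is identical in shape to the odd-prime computation behind \eqref{h1}; it produces $\frac{1-p^{v+1}}{1-p}-\left(\df{-3}{p}\right)\frac{1-p^{v}}{1-p}$. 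Since $3\mid\Delta$, the factor at $p=3$ collapses to $1$ for every $b$, which is precisely why $3^{b}$ is absent from \eqref{h3}; at $p=2$ one must obtain $2^{a+1}-1$. (iii) Fix the normalization from the initial coefficient: $(1,1,3,0,0,0;1)=4$ matches $4\cdot(2^{1}-1)\cdot1$, pinning the global constant.

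The main obstacle is the analysis at the ramified primes $p=2$ and $p=3$, where the form is not unimodular and the generic local-density formula must be corrected. This is exactly where \eqref{h1} and the Jacobi triple product do the real work: rather than compute $2$-adic and $3$-adic densities \emph{ab initio}, I would search for an exact theta identity, verifiable termwise from the triple product, that expresses the square-coefficient extraction of $\varphi(q)^2\varphi(q^3)$ through that of $\varphi^3=\vartheta(1,1,1,0,0,0,q)$, whose square coefficients are already delivered by \eqref{h1}. Reproducing the nontrivial factor $2^{a+1}-1$—nontrivial precisely because, unlike the three-square case, the $2$-part of \eqref{h3} genuinely grows with $a$—against the Hurwitz template is the delicate endgame where I expect the bulk of the effort to lie.
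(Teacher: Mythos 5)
Your plan is the same one the paper takes: the paper offers no written proof of \eqref{h3}, stating only that all such Hurwitz analogues for class-number-one ternary forms ``easily follow from the Siegel formula'' \cite{siegel}, \cite{berk}, with explicit worked versions in \cite{cooperlam} and \cite{guo} --- i.e., exactly your one-class-genus/local-density argument, normalized by the coefficient at $n=1$. Note only that essentially all of the arithmetic content of \eqref{h3} (the unramified Euler factor, the triviality of the $3$-adic factor, and above all the factor $2^{a+1}-1$) sits in the ramified local computations you defer to the ``delicate endgame,'' so what you have is the correct strategy rather than a completed proof --- but that is no less than what the paper itself provides.
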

\noi
We remark that all analogues of Hurwitz's formula for ternary quadratic forms of class number one 
easily follow from the Siegel formula \cite{siegel}, \cite[(2.1)]{berk}. Moreover, it is not necessary for the class number to be one.
For example, it is not difficult to establish the following:
\begin{theorem}
Let $n$ be a positive integer with prime factorization
\begin{equation}
n= 2^a 3^b\prod_{p>3}p^v.
\end{equation}
Then
\begin{equation}
\label{insert}
\df{(1,3,36,0,0,0;n^2)+(3,4,9,0,0,0;n^2)}{2} =   (3~2^{a}  -2) g(b) \prod_{p>3}\left( \frac{1-p^{v+1}}{1-p} -\left(\df{-3}{p}\right)\frac{ 1-p^v}{1-p}\right),
\end{equation}
where   $g(0) =1, g(b)=2$ when $b\geq1$.
\end{theorem}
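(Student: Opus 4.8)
The natural tool here is the Siegel mass formula, exactly as advertised in the sentence preceding the statement. The plan is to realize the left-hand side as a genus-averaged representation number and then read off the right-hand side as a product of local densities. First I would record that both forms are diagonal of the same discriminant: a direct substitution into the displayed formula for $\Delta$ gives $\Delta(1,3,36,0,0,0)=\Delta(3,4,9,0,0,0)=432=2^4\cdot 3^3$, whose squarefree part is $3$. This already explains why the Jacobi symbol $\left(\df{-3}{p}\right)$ governs the product over $p>3$, precisely as in \eqref{h3}, since the squarefree part of $-\Delta$ is $-3$. The genus-theoretic input I would then verify is that $(1,3,36,0,0,0)$ and $(3,4,9,0,0,0)$ are $\mathbb{Z}_p$-equivalent at every prime $p$, hence lie in a common genus, and that this genus contains exactly these two classes.

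Because both forms are diagonal with pairwise distinct coefficients, their only integral automorphs are the eight sign changes $(x,y,z)\mapsto(\pm x,\pm y,\pm z)$, so each has automorph group of order $8$. With equal weights the Siegel average degenerates pleasantly: writing $\beta_p(m)$ for the local representation density of the genus at $p$, the mass is $\tf18+\tf18=\tf14$ and Siegel's formula collapses to
\begin{equation*}
\df{(1,3,36,0,0,0;m)+(3,4,9,0,0,0;m)}{2}=\prod_p \beta_p(m).
\end{equation*}
Equivalently, $\tf12\big(\vartheta(1,3,36,0,0,0,q)+\vartheta(3,4,9,0,0,0,q)\big)$ is the Eisenstein (genus) series of this genus, whose coefficients are the products of local densities. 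This is the step that converts an \emph{a priori} pair of possibly cusp-form-contaminated theta series into a purely multiplicative object.

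It then remains to specialize $m=n^2$ and evaluate the local densities. For a good prime $p>3$ the form is $p$-adically unimodular, and the classical ternary computation for the argument $p^{2v}u$ with $p\nmid u$ produces exactly
\begin{equation*}
\beta_p(n^2)=\df{1-p^{v+1}}{1-p}-\left(\df{-3}{p}\right)\df{1-p^v}{1-p},
\end{equation*}
the same local factor already present in \eqref{h3}; I would import this verbatim. The two remaining factors come from the primes dividing $2\Delta$. A direct $2$-adic analysis of the Jordan splittings of the two forms yields the factor $3\cdot 2^{a}-2$ (reflecting $2^4\parallel\Delta$), and a $3$-adic analysis yields $g(b)$, the jump from $g(0)=1$ to $g(b)=2$ being caused by the odd exponent in $3^3\parallel\Delta$. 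Assembling the three pieces gives the claimed identity.

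The main obstacle is the bad-prime bookkeeping at $p=2$ and $p=3$: these are precisely the primes where the forms are not diagonalizable in the simplest $p$-adic shape, so their local densities must be extracted from the $p$-adic Jordan decompositions rather than from a single geometric series, and this is where the exact constants $3\cdot 2^{a}-2$ and $g(b)$ are pinned down. The verification that the genus has class number two with equal automorphs is a finite check, but it is logically essential: were the weights unequal, the clean factor $\tf12$ on the left would be replaced by a weighted average and the product formula would not take the stated shape.
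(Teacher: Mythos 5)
Your proposal is correct and follows essentially the same route the paper intends: the paper offers no written proof of \eqref{insert} beyond the remark that such formulas ``easily follow from the Siegel formula,'' and your plan --- identify $(1,3,36,0,0,0)$ and $(3,4,9,0,0,0)$ as the two classes of a genus of discriminant $432$ with equal automorph groups of order $8$, so that the plain average $\tfrac12\bigl(\vartheta(1,3,36,0,0,0,q)+\vartheta(3,4,9,0,0,0,q)\bigr)$ is the genus Eisenstein series, then evaluate the local densities at $n^2$ --- is exactly that argument made explicit. The only work you leave unfinished (the $2$-adic and $3$-adic densities pinning down $3\cdot 2^a-2$ and $g(b)$) is routine Jordan-splitting bookkeeping that the paper likewise omits.
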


Next, we observe  that \eqref{h1}, \eqref{h2}, \eqref{h3} and \eqref{insert} imply the following inequalities:
\begin{corollary}
Let $M\not\equiv 0\imod 2, E\not\equiv 0\imod 2$ and $ W\not\equiv 0\imod 3$  be positive integers. Then
\begin{equation}
\label{hc1}
\df{(1,1,1,0,0,0;M^2)}{6}\ge M,
\end{equation}
with equality if and only if all prime divisors of $M$ are congruent to $1\imod 4$,
\begin{equation}
\label{hc2}
\df{(1,1,2,0,0,0;E^2)}{4} \ge E,
\end{equation}
with equality if and only if all prime divisors of $E$ are congruent to $1\imod 8$ or $3\imod 8$, 
\begin{equation}
\label{hc3}
\df{(1,1,3,0,0,0;W^2)}{4} \ge W,
\end{equation}
\begin{equation}
\label{hc4}
\df{(1,3,36,0,0,0;W^2)+(3,4,9,0,0,0;W^2)}{2} \ge W, 
\end{equation}
with equality if and only if all prime divisors of $W$ are congruent to $1\imod 3$. 
\end{corollary}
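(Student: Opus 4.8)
The plan is to reduce all four inequalities to a single elementary observation about the local factors appearing in \eqref{h1}, \eqref{h2}, \eqref{h3}, and \eqref{insert}. First I would write each geometric sum in closed form: for an odd prime $p$ and integer $v\ge 1$,
\[
\frac{1-p^{v+1}}{1-p}=1+p+\cdots+p^v,\qquad \frac{1-p^{v}}{1-p}=1+p+\cdots+p^{v-1},
\]
so that a typical factor becomes
\[
\frac{1-p^{v+1}}{1-p}-\chi(p)\,\frac{1-p^{v}}{1-p}=(1+p+\cdots+p^v)-\chi(p)(1+p+\cdots+p^{v-1}),
\]
where $\chi$ is one of the Jacobi symbols $\left(\df{-1}{\cdot}\right)$, $\left(\df{-2}{\cdot}\right)$, $\left(\df{-3}{\cdot}\right)$. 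Since $p$ is odd and coprime to the relevant discriminant, $\chi(p)\in\{+1,-1\}$.

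The key lemma I would isolate is that, for $v\ge 1$, this factor is $\ge p^v$, with equality if and only if $\chi(p)=+1$. Indeed, if $\chi(p)=+1$ the two sums telescope to leave exactly $p^v$; if $\chi(p)=-1$ they add to give $p^v+2(1+p+\cdots+p^{v-1})>p^v$. Multiplying these bounds over the prime divisors of $M$ (resp.\ $E$) immediately yields \eqref{hc1}, \eqref{hc2}: since $M$ and $E$ are odd the prefactor is $f(0)=1$, and the product of the $p^v$ reconstitutes $M$ (resp.\ $E$). Equality forces $\chi(p)=+1$ at every prime divisor, which I would translate through the standard evaluations $\left(\df{-1}{p}\right)=1\iff p\equiv 1\imod 4$ and $\left(\df{-2}{p}\right)=1\iff p\equiv 1,3\imod 8$, recovering the stated equality criteria.

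For \eqref{hc3} and \eqref{hc4} the integer $W$ may be even, so I must account for the $2$-adic prefactors. Writing $W=2^a\prod_{p>3}p^v$ (with no factor of $3$, so $b=0$ and $g(0)=1$), I would note that $2^{a+1}-1\ge 2^a$ and $3\cdot 2^a-2\ge 2^a$, both with equality if and only if $a=0$. Combining these with the factor bounds $\ge\prod_{p>3}p^v$ gives both inequalities, the product reconstituting $W$. For equality one needs simultaneously $a=0$ and $\left(\df{-3}{p}\right)=1\iff p\equiv 1\imod 3$ at every odd prime divisor; since $2\equiv 2\imod 3$, the single condition that every prime divisor of $W$ be $\equiv 1\imod 3$ already forces $a=0$, so the two equality conditions collapse into the one stated.

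The argument is entirely elementary once the factors are recognized as geometric sums, so there is no serious analytic obstacle. The only point demanding care is the bookkeeping for \eqref{hc3} and \eqref{hc4}: I must verify that the $2$-adic prefactors never fall below $2^a$ and that their equality case $a=0$ is subsumed by the congruence condition on the odd prime divisors, so that the equality criterion can be phrased purely in terms of primes $\equiv 1\imod 3$.
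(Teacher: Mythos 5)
Your proof is correct and is exactly the derivation the paper intends: the paper states the corollary as an immediate observation from \eqref{h1}--\eqref{insert} without writing out the details, and your factor-by-factor bound (each local factor is $\ge p^v$ with equality iff the Jacobi symbol is $+1$, plus the check that the $2$-adic prefactors $2^{a+1}-1$ and $3\cdot 2^a-2$ are $\ge 2^a$ with equality iff $a=0$) is the natural way to fill them in. The observation that $a=0$ is already forced by the congruence condition on the prime divisors of $W$ correctly reconciles your equality analysis with the criterion as stated.
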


One of  main objectives of this paper is to discuss three nontrivial analogues of the Gauss theorem \ref{g0}.
We will use \eqref{hc1} to prove: 
\begin{theorem}
\label{b10}
Let $n$ be a positive integer. Then
\begin{equation}
\label{huj}
[q^{8n+1}]\sum_{\substack{x\equiv1\imod{4},\\
                \mbox{ }y\equiv2\imod{8},\\
                z\equiv2\imod{8}}} q^{x^2+ y^2 + z^2}\geq0,
\end{equation}
with equality if and only if  $8n+1= M^2$ and all prime divisors of $M$ are congruent to $1$ modulo $4$.
\end{theorem}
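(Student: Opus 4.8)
The plan is to dispose of the inequality \eqref{huj} immediately and then spend all the effort on the equality case via \eqref{hc1}. By definition the coefficient on the left of \eqref{huj} is the number of integer triples $(x,y,z)$ with $x\equiv1\imod4$, $y\equiv z\equiv2\imod8$ and $x^2+y^2+z^2=8n+1$, so it is a nonnegative integer and \eqref{huj} is trivial; write $C_n$ for this count and $N=8n+1$. To analyze $C_n=0$ I would first split each variable by its residue: since for each odd value exactly one of $\pm$ lies in the class $1\imod4$, one has $\sum_{x\equiv1\imod4}q^{x^2}=\tf12\sum_{x\text{ odd}}q^{x^2}$, and likewise $\sum_{y\equiv2\imod8}q^{y^2}=\tf12\sum_{y\equiv2\imod4}q^{y^2}$. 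Multiplying these three partial theta series identifies $C_n=\tf1{24}B(N)$, where $B(N)$ counts the three-square representations of $N$ of ``type B'': one odd coordinate and two coordinates $\equiv2\imod4$. Hence $C_n=0$ if and only if $B(N)=0$, i.e. every representation $N=x^2+y^2+z^2$ has both of its even coordinates divisible by $4$.

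Next I would pin down $B(N)$ arithmetically. For $N\equiv1\imod8$ and $x$ odd one has $N-x^2\equiv0\imod8$, and the representation type is forced by a congruence mod $16$: writing $N-x^2=y^2+z^2$, the coordinates $y,z$ are necessarily even, and $y\equiv z\equiv2\imod4$ holds exactly when $x^2\equiv N-8\imod{16}$, while $y\equiv z\equiv0\imod4$ holds exactly when $x^2\equiv N\imod{16}$. Thus $B(N)=3\sum_{x}r_2(N-x^2)$, where $r_2(m)$ is the number of ordered two-square representations of $m$ and the sum runs over odd $x$ in the single pair of residue classes mod $8$ singled out by $N\imod{16}$. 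Consequently $B(N)=0$ if and only if $N-x^2$ is not a sum of two squares for every such admissible $x$ with $0<x^2<N$, equivalently the odd part $(N-x^2)/8$ always carries a prime $\equiv3\imod4$ to an odd power. The target is the equivalence
\[
B(N)=0\iff N=M^2\text{ with every prime divisor of }M\text{ congruent to }1\imod4 .
\]

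To bring in \eqref{hc1} I would use, for odd $N$, the companion identities $(1,4,4,0,0,0;N)=\tf13\,(1,1,1,0,0,0;N)$ and $(1,16,16,0,0,0;N)=\tf13\bigl((1,1,1,0,0,0;N)-B(N)\bigr)$, both obtained by placing the unique odd coordinate first and rescaling the two even coordinates by $2$, respectively $4$. These give $B(N)=(1,1,1,0,0,0;N)-3\,(1,16,16,0,0,0;N)$. For $N=M^2$ the total $(1,1,1,0,0,0;M^2)$ is evaluated by the Hurwitz formula \eqref{h1}, and \eqref{hc1} reads it off as $\ge6M$ with equality precisely when all primes of $M$ are $\equiv1\imod4$. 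The heart of the matter is to show that this equality case of \eqref{hc1} coincides exactly with the vanishing of every type-B term of the previous paragraph: when all primes of $M$ are $\equiv1\imod4$ the sum-of-two-squares obstruction should hold for each admissible $x$, forcing $B(M^2)=0$; whereas a prime $\equiv3\imod4$ dividing $M$ — or $N$ failing to be a perfect square at all — should produce an admissible $x$ for which $N-x^2$ is a sum of two squares, hence a genuine type-B representation and $C_n>0$.

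The main obstacle is precisely this last matching: aligning, on the nose, the equality case of \eqref{hc1} with the arithmetic vanishing of $B(N)$. The delicate direction is ``$B(N)>0$ whenever $N$ is not a square of a product of primes $\equiv1\imod4$'', since it demands exhibiting an explicit admissible $x$ with $N-x^2$ a sum of two squares; I expect to handle this either by descending to primitive representations (where the two-square criterion is cleaner) or by reading the Hurwitz product in \eqref{h1} factor-by-factor exactly as in the proof of \eqref{hc1}, so that a factor coming from a prime $\equiv3\imod4$ manufactures the missing representation. Everything preceding this step — the theta-splitting, the reduction $C_n=\tf1{24}B(N)$, the mod-$16$ forcing of the representation type, and the two companion identities — is routine.
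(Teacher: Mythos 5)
Your opening observation is correct and does simplify one half of the theorem: the coefficient in \eqref{huj} is literally a count of lattice points, so the inequality itself is trivial, and your reduction $C_n=\tfrac1{24}B(N)$ (via the $\pm$ symmetry of each residue class and the mod $16$ forcing of the type of the two even coordinates) is sound, as are the companion identities $(1,4,4,0,0,0;N)=\tfrac13(1,1,1,0,0,0;N)$ and $B(N)=(1,1,1,0,0,0;N)-3(1,16,16,0,0,0;N)$ for $N\equiv1\pmod 8$. But the entire content of the theorem is the characterization of when $C_n=0$, and that is exactly the step you defer: you state that matching the equality case of \eqref{hc1} with the vanishing of $B(N)$ is ``the main obstacle'' and offer only two unexecuted strategies (descent to primitive representations, or reading the Hurwitz product factor by factor). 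Neither is carried out, and the harder direction --- producing, for every $N\equiv1\pmod 8$ that is not the square of a product of primes $\equiv1\pmod 4$, an admissible odd $x$ with $N-x^2$ a sum of two squares --- is a genuine theorem, not a routine verification. In particular your plan never explains why $B(N)>0$ for \emph{every} non--square $N\equiv 1\pmod 8$, which does not follow from \eqref{hc1} at all (that inequality only speaks about $N=M^2$) and needs the three--square theorem \eqref{lg0} as input somewhere.

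What closes this gap in the paper is a single identity you do not have: Jacobi's $\sum_{n>0}\left(\frac{-4}{n}\right)nq^{n^2}=qE(q^8)^3$ \eqref{Jac1}, combined with $\phi(-q^8)^2\psi(q^8)=E(q^8)^3$ \eqref{lucky} and the dissection $P_{8,1}\phi(q)^3/6=q\phi(q^8)^2\psi(q^8)$ \eqref{g}, yields
\begin{equation*}
8\,[q^{8n+1}]\,q^9\psi(q^8)\psi(q^{32})^2 \;=\; \frac{(1,1,1,0,0,0;8n+1)}{6}\;-\;[q^{8n+1}]\sum_{n>0}\left(\frac{-4}{n}\right)nq^{n^2},
\end{equation*}
so the count you call $\tfrac13 B(N)$ equals $\tfrac16(1,1,1,0,0,0;N)$ minus a term that is nonzero only when $N$ is a perfect square, where it equals $\left(\frac{-1}{M}\right)M$. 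The non--square case is then immediate from \eqref{lg0}, and the square case is precisely the equality statement of \eqref{hc1}. In other words, the ``matching'' you hope to do by hand is an identity of weight $3/2$ theta series, and without \eqref{Jac1} (or an equivalent arithmetic evaluation of $(1,16,16,0,0,0;N)$) your argument does not go through. I would recommend either importing that identity explicitly or abandoning the $B(N)$ bookkeeping in favor of the generating--function computation, which is shorter.
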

\noi
We will use \eqref{hc2} to prove: 
\begin{theorem}
\label{b20}
Let $n$ be a positive integer. Then
\begin{equation}
[q^{8n+1}]\sum_{\substack{x\equiv1\imod{4},\\
                \mbox{ }y\equiv4\imod{16},\\
                z\equiv0\imod{2}}} q^{x^2+ y^2 + 2z^2}\geq0,
\end{equation}
with equality if and only if $8n+1$= $E^2$ and  all prime divisors of $E$ are congruent to $1$ or $3$ modulo $8$.
\end{theorem}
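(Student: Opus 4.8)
The inequality itself is immediate: the left side is the number $r(8n+1)$ of representations of $8n+1$ by $x^2+y^2+2z^2$ subject to the three stated congruences, and a count of lattice points is never negative. The whole content is therefore the characterisation of the case $r(8n+1)=0$, and this is precisely what \eqref{hc2} is meant to supply. My plan is to first turn $r$ into a difference of two ordinary ternary representation numbers by the Jacobi triple product, and then to compare that difference against \eqref{hc2} on perfect squares. Throughout I abbreviate $(a,b,c;m):=(a,b,c,0,0,0;m)$ and write $\phi(q):=\sum_{n\in\ZBbb}q^{n^2}$.

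First I would substitute $x=u$ with $u\equiv1\imod4$, $y=4v$ with $v\equiv1\imod4$, and $z=2c$, so that the constrained exponent becomes $u^2+16v^2+8c^2$. Since $\sum_{u\equiv1(4)}q^{u^2}=\tf12(\phi(q)-\phi(q^4))$ and $\sum_{v\equiv1(4)}q^{16v^2}=\tf12(\phi(q^{16})-\phi(q^{64}))$ (each counts the odd squares exactly once) while $\sum_{c}q^{8c^2}=\phi(q^8)$, the generating function of $r$ factors as $\tf14(\phi(q)-\phi(q^4))(\phi(q^{16})-\phi(q^{64}))\phi(q^8)$. Expanding the product produces four diagonal ternary theta series, attached to $(1,8,16)$, $(1,8,64)$, $(4,8,16)$, $(4,8,64)$; since $8n+1$ is odd the last two contribute nothing, and I obtain the clean identity $4\,r(8n+1)=(1,8,16;8n+1)-(1,8,64;8n+1)$, which is visibly the number of representations of $8n+1$ by $(1,8,16)$ having $z$ odd.

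Next I would pass back to the form $(1,1,2)$ so that \eqref{hc2} applies. Using the elementary theta identity $(1,1,2;N)=2\,(1,8,16;N)$, valid for all odd $N$ (it reflects the bijection $x^2+8y^2+16z^2\leftrightarrow x^2+(4z)^2+2(2y)^2$ together with the $x\leftrightarrow y$ symmetry of $(1,1,2)$), I get $r(N)=\tf18(1,1,2;N)-\tf14(1,8,64;N)$, so that $r(N)=0$ if and only if $(1,1,2;N)=2\,(1,8,64;N)$. On a perfect square $N=E^2$ (and $8n+1$ forces $E$ odd), \eqref{hc2} gives $\tf18(1,1,2;E^2)\ge\tf12E$ with equality exactly when every prime factor of $E$ is $\equiv1$ or $3\imod8$; paired with the companion estimate $\tf14(1,8,64;E^2)\le\tf12E$ having the same equality locus, this pins down $r(E^2)=0$ precisely under the stated congruence condition on $E$.

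The main obstacle is exactly this last comparison. The lower bound on $\tf18(1,1,2;E^2)$ is handed to me by \eqref{hc2}, but I still owe the matching upper bound $\tf14(1,8,64;E^2)\le\tf12E$, with equality iff the primes of $E$ lie in $\{1,3\}\imod8$; I would extract it from the Siegel/Hurwitz evaluation of $(1,8,64;E^2)$ in the same spirit as \eqref{h2}, noting that the relevant character is again $\bigl(\df{-2}{p}\bigr)$. The remaining point is to exclude zeros at non-squares: for $N=8n+1$ not a perfect square one must show $r(N)>0$, i.e. that such an $N$ always admits a representation by $(1,8,16)$ with odd $z$, equivalently $(1,1,2;N)>2\,(1,8,64;N)$; I expect this to follow from the strictness of the corresponding product/Eisenstein lower bound once $N$ is not forced into the extremal square case, but controlling the correction term $(1,8,64;N)$ uniformly in $N$ is the delicate step.
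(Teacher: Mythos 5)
Your opening reduction is sound: the generating function of $r$ is indeed $\tf14\left(\phi(q)-\phi(q^4)\right)\left(\phi(q^{16})-\phi(q^{64})\right)\phi(q^8)$, which for odd $N$ yields $4r(N)=(1,8,16,0,0,0;N)-(1,8,64,0,0,0;N)$. The identity $(1,1,2,0,0,0;N)=2(1,8,16,0,0,0;N)$ is also true in the only range you need, $N\equiv 1\imod 8$, but it is false ``for all odd $N$'' as you assert (at $N=3$ the two sides are $8$ and $0$), and the bijection you invoke only gives one inequality; it is the congruence $N\equiv1\imod 8$ that forces $z$ even and $y\equiv0\imod 4$ in $x^2+y^2+2z^2=N$ once $x$ is odd. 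The genuine gap lies in the two estimates you defer. The ``companion estimate'' $\tf14(1,8,64,0,0,0;E^2)\le\tf12E$ is simply false: for $E=25$ a direct count gives $(1,8,64,0,0,0;625)=62$, so $\tf14(1,8,64,0,0,0;625)=\tf{31}{2}>\tf{25}{2}=\tf12 E$. The failure is structural: the paper's identity \eqref{id112} shows that for odd $E$ one has $(1,8,64,0,0,0;E^2)=\tf14(1,1,2,0,0,0;E^2)+\left(\df{-2}{E}\right)E$, so the two quantities you hope to straddle around $\tf12E$ grow \emph{together} as $E$ acquires prime factors $p$ with $\left(\df{-2}{p}\right)=-1$; only their difference is controlled by $E$, never each term separately. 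No Hurwitz-type evaluation of $(1,8,64,0,0,0;n^2)$ can repair this, and the non-square case, which you flag as the delicate step, is left entirely open.

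What is missing is precisely the content of \eqref{id112},
\[
q\psi(q^8)\phi(q^8)\phi(q^{16})-qE(-q^8)^3=4q^{17}\phi(q^8)\psi(q^8)\psi(q^{128}),
\]
which the paper assembles from \eqref{phevod}, \eqref{lucky}, \eqref{lucky2} and the triple-product evaluation \eqref{Jac2}. In your language it says that $(1,8,64,0,0,0;N)-\tf12(1,8,16,0,0,0;N)$ equals the coefficient of $q^N$ in the square-supported series $\sum_{n>0}\left(\df{-2}{n}\right)nq^{n^2}$. That single fact settles both of your open points at once: for non-square $N\equiv1\imod 8$ it gives $4r(N)=\tf14(1,1,2,0,0,0;N)>0$ by Theorem \ref{Dick86}, and for $N=E^2$ it gives $4r(E^2)=\tf14(1,1,2,0,0,0;E^2)-\left(\df{-2}{E}\right)E\ge0$ with the stated equality condition by \eqref{hc2}. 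As written, your proposal reduces the theorem to two statements, one of which is false and the other of which is equivalent to the theorem itself.
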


\noi
We will use \eqref{hc3} to prove: 
\begin{theorem}
\label{b30}
Let $n$ be a positive integer. Then
\begin{equation}
 [q^{24n+1}]\sum_{\substack{ x\equiv 3\imod{12},\\
                y\equiv z\imod{6},\\
                y+z\equiv 2\imod{6}}} q^{x^2+ 4y^2 +12z^2}\geq0,
\end{equation}  
with equality if and only if $24n+1=W^2$ and all prime divisors of $W$ are congruent to $1$ modulo $3$.
\end{theorem}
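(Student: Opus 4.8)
The plan is to reduce the congruence-restricted count to a count of representations by the diagonal form $(1,1,3,0,0,0)$ and then invoke the inequality \eqref{hc3} together with Dickson's Theorem~\ref{Dick89}. First I would make the linear substitution $u=x,\ v=2y,\ w=2z$, which turns $x^2+4y^2+12z^2$ into $u^2+v^2+3w^2$ and translates the congruence conditions $x\equiv 3\imod{12}$, $y\equiv z\imod 6$, $y+z\equiv 2\imod 6$ into $u\equiv 3\imod{12}$, $v\equiv w\imod{12}$, $v+w\equiv 4\imod{12}$. A short computation shows that every admissible exponent $u^2+v^2+3w^2$ is automatically $\equiv 1\imod{24}$, so that the operator $[q^{24n+1}]$ merely extracts the number $R(N)$ of solutions of $u^2+v^2+3w^2=N$, with $N:=24n+1$, lying in these residue classes. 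In particular $\gcd(N,6)=1$.

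Next I would record the generating function of $R(N)$ as a product of a one-dimensional theta series in $u$ and a two-dimensional theta series in the coupled variables $(v,w)$, and then rewrite each factor through the Jacobi triple product. The aim of this step is to establish a theta-function identity of the shape
\[
\sum_{n\ge 1} R(24n+1)\,q^{24n+1}=\tfrac14\sum_{u^2+v^2+3w^2\equiv 1\imod{24}}q^{u^2+v^2+3w^2}-\sum_{\substack{W\ge 1\\ (W,6)=1}}W\,q^{W^2},
\]
equivalently, at the level of coefficients,
\[
R(N)=\tfrac14\,(1,1,3,0,0,0;N)-\rho(N),\qquad \rho(N)=\begin{cases}W,& N=W^2,\\ 0,&\text{otherwise.}\end{cases}
\]
The constant $\tfrac14$ and the correction $\rho$ are to be pinned down by matching both sides; in fact the shape of $\rho$ is forced by the equality case of \eqref{hc3}, as explained below.

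Granting this identity, the theorem follows quickly. If $N=24n+1$ is not a perfect square, then $\rho(N)=0$ and $R(N)=\tfrac14(1,1,3,0,0,0;N)$; since $3\nmid N$ we have $N\neq 9^{v}3(3m+2)$, so Theorem~\ref{Dick89} gives $(1,1,3,0,0,0;N)>0$ and hence $R(N)>0$. If instead $N=W^2$ with $(W,6)=1$, then $R(N)=\tfrac14(1,1,3,0,0,0;W^2)-W\ge 0$ by \eqref{hc3}, and equality holds if and only if $\tfrac14(1,1,3,0,0,0;W^2)=W$, which by the equality statement of \eqref{hc3} occurs exactly when every prime divisor of $W$ is congruent to $1\imod 3$. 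This is precisely the asserted equality condition in Theorem~\ref{b30}.

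The main obstacle is the theta identity of the second step. The subtlety is that the congruence $w\equiv 2\imod 3$ hidden in the restricted sum is not induced by any automorphism of $u^2+v^2+3w^2$ (sign changes and the swap $u\leftrightarrow v$ act trivially on $w$ modulo $3$), so one cannot obtain the factor $\tfrac14$ by a naive orbit count; instead the coupled conditions on $(v,w)$ modulo $12$ must be resolved by honest theta-function manipulation, and the linear-in-$W$ term supported on squares must be separated off as a theta-derivative (Eisenstein) contribution. Once this decomposition is carried out and compared against the Hurwitz-type formula \eqref{h3}, the constants fall into place and the argument closes.
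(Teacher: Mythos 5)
Your overall strategy is the paper's: split into the non-square case (handled by Theorem \ref{Dick89}) and the square case (handled by \eqref{hc3}), with everything resting on a single theta-function identity expressing the restricted count as a constant multiple of $(1,1,3,0,0,0;N)$ minus a correction supported on squares. But that identity is the entire content of the theorem, you do not prove it, and as stated it is false. First, the correction term must carry the character: the true square-supported term is $\left(\df{-3}{W}\right)W$, not $W$. For $N=25=5^2$ one has $\left(\df{-3}{5}\right)=-1$, $(1,1,3,0,0,0;25)=28$, and a direct count gives exactly one admissible triple, namely $(x,y,z)=(3,1,1)$; this is consistent only with a correction of sign $\left(\df{-3}{5}\right)=-1$ (the count \emph{exceeds} the proportional share), and is flatly inconsistent with subtracting $+W$. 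Your remark that ``the shape of $\rho$ is forced by the equality case of \eqref{hc3}'' is circular: \eqref{hc3} only constrains $\rho$ when equality is attained, i.e.\ precisely when $\left(\df{-3}{W}\right)=+1$, so it cannot detect the sign. Second, the constant is $\tfrac1{48}$, not $\tfrac14$ (e.g.\ $N=73$: the restricted count is $1$ while $(1,1,3,0,0,0;73)=48$); your own caveat that the naive orbit count fails is correct, but then nothing in the proposal actually produces the constant. Third, and structurally most important, the series you propose to subtract, $\sum_{(W,6)=1}Wq^{W^2}$, is not the right modular object: what makes the argument work is that the correct correction $\sum_{n>0}\left(\df{-12}{n}\right)nq^{n^2}$ is an eta quotient, $q\,E(q^{24})^5/E(q^{48})^2$, by the quintuple product identity \eqref{scqpi}.

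What is missing, concretely, is the chain \eqref{easy}--\eqref{harder}: using the dissections \eqref{mod31}, \eqref{mod33}, \eqref{evoddis}, \eqref{aq1}, \eqref{aq3}, \eqref{neq2}, one shows
$P_{24,1}\,\phi(q)^2\phi(q^3)=4q\,E(q^{24})^5/E(q^{48})^2+16q^{25}\psi(q^{72})c(q^{48})$,
where $c(q)$ is the Borwein cubic theta function; the last step \eqref{step1}--\eqref{step2} reduces to the modular equation \eqref{modeqn} via \eqref{aq2}. Combined with \eqref{scqpi} this yields \eqref{JPEG1}, which is the identity your proof needs (after identifying the restricted theta series of the statement with $q^{25}\psi(q^{72})$ times the appropriate multiple of $c(q^{48})$, itself a small computation you also omit). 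None of this machinery --- the quintuple product, the functions $a(q),c(q)$, the dissection of $\phi(q)^2\phi(q^3)$ modulo $24$ --- appears in your proposal, so the argument does not close. The final two paragraphs of your write-up (the two cases) would be fine once the identity is in hand, provided you restore the factor $\left(\df{-3}{W}\right)$ and note that when it equals $-1$ the inequality is strict for trivial reasons.
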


The rest of this paper is organized as follows. In the next section, we collect necessary $q$-series  identities. In Section $3$ we will 
prove Theorems \ref{b10}, \ref{b20} and \ref{b30}. In Section $4$, we prove some conjectures of Kaplansky regarding two spinor-regular
positive ternary forms of discriminant $16384$.

\section{$q$-Series and Theta Function identities}
\label{theta }

We begin by collecting some standard $q$-notations, definitions, and useful formulas,
\begin{equation}
\label{poch}
(a;q)_\i:= \prod_{n=0}^{\infty}(1-aq^n),
\end{equation}
and
\begin{equation}
\label{E}
E(q):= (q;q)_\i.
\end{equation}
Note that 
\begin{equation}
\label{Eneg}
E(-q):= \df{E^3(q^2)}{E(q^4)E(q)}. 
\end{equation}
Next, we recall the Ramanujan theta function
\begin{equation}
\label{fdef}
f(a,b):= \sum_{n=-\infty}^{\infty}a^{\tf{n(n+1)}{2}}b^{\tf{n(n-1)}{2}}, \hspace{1cm} |ab|<1.
\end{equation}
In Ramanujan's notation, the celebrated Jacobi triple product identity takes the shape \cite[p. 35]{bern},
\begin{equation}
\label{jtp}
f(a,b) = (-a;ab)_{\i}(-b;ab)_{\i}(ab;ab)_{\i}.
\end{equation}
One may use \eqref{jtp} to derive the following important special cases:
\begin{equation}
\label{pent}
f(-q,-q^2)=\sum_{n=-\infty}^\infty(-1)^nq^{\tf{n(3n+1)}{2}}=E(q),
\end{equation}
\begin{equation}
   \label{Jac1}
   \sum_{n>0}\left(\df{-4}{n}\right) n q^{n^2}=qE(q^8)^3,
   \end{equation}
	
	\begin{equation}
   \label{Jac2}
   \sum_{n>0}\left(\df{-2}{n}\right) n q^{n^2}=qE(-q^8)^3,
   \end{equation}

	 \begin{equation}
   \label{phi}
   \phi(q):=f(q,q)= \sum_{n=-\infty}^{\infty}q^{n^2} = \df{E^5(q^2)}{E^2(q^4)E^2(q)},
   \end{equation}
	
	 \begin{equation}
   \label{phineg}
   \phi(-q)= \sum_{n=-\infty}^{\infty}(-1)^n q^{n^2} = \df{E^2(q)}{E(q^2)},
   \end{equation}
	
   \begin{equation}
   \label{psi}
   \psi(q):=f(q,q^3)= \sum_{n=-\infty}^{\infty}q^{2n^2-n} = \df{E^2(q^2)}{E(q)},
   \end{equation}
	
	 \begin{equation}
   \label{psineg}
   \psi(-q) = \sum_{n=-\infty}^{\infty}(-1)^n q^{2n^2-n}= \df{E(q^4)E(q)}{E(q^2)},
   \end{equation}
	
\begin{equation}
\label{f12}
f(q,q^2)= \sum_{n=-\infty}^{\infty} q^{\tf{n(3n+1)}{2}}=  \df{E^2(q^3)E(q^2)}{E(q^6)E(q)},
\end{equation}
	
\begin{equation}
\label{f15}
f(q,q^5)= \sum_{n=-\infty}^{\infty} q^{3n^2+2n}= \df{E(q^{12})E(q^3)E^2(q^2)}{E(q^6)E(q^4)E(q)}.
\end{equation}
Note that \eqref{pent} is the famous Euler pentagonal number theorem.  We observe that \eqref{phineg} and \eqref{psi} can be combined to yield
\begin{equation}
\label{lucky}
\phi(-q^8)^2\psi(q^8) = E(q^8)^3. 
\end{equation}		
Analogously, from \eqref{phi}, \eqref{phineg}, \eqref{psi} and \eqref{psineg}, we deduce that 
\begin{equation}
\label{lucky2}
\phi(-q^2)\psi(q) = \phi(q)\psi(-q).
\end{equation}
The relations \eqref{lucky} and \eqref{lucky2} will come in handy in our proof of Theorems \ref{b10} and \ref{b20}, respectively.

The  well-known quintuple product identity can be formulated as in \cite[p. 18]{bern2},
\begin{equation}
\label{qpi}
\sum_{n=-\infty}^{\infty} q^{3n^2+n}\left(\df{z^{3n}}{q^{3n}}-\df{q^{3n+1}}{z^{3n+1}}\right) = 
(q^2;q^2)_{\i}(qz;q^2)_{\i}\left(\df{q}{z};q^2\right)_{\i}(z^2;q^4)_{\i}\left(\df{q^4}{z^2};q^4\right)_{\i},
\end{equation}
with $z\not=0$.
It implies that

\begin{equation}
\label{scqpi}
\sum_{n>0}\left(\df{-12}{n}\right)nq^{n^2} =q\phi(q^{12})E(q^{12})^2=q\df{E(q^{24})^5}{E(q^{48})^2},
\end{equation}

and
\begin{equation}
\label{scqpi2}
\sum_{n>0} (-1)^{n+1} \left(\df{-3}{n}\right)nq^{n^2} =q\phi(q^{3})E(q^{12})^2=q\df{E(q^{6})^5}{E(q^{3})^2}.
\end{equation}

The function $f(a,b)$ may be dissected in many different ways. We will use the following trivial dissections \cite[p. 40, p. 49]{bern},
\begin{equation}
\label{phevod}
\phi(q)=\phi(q^4)+2q\psi(q^8), 
\end{equation}

\begin{equation}
\label{mod31}
\phi(q)= \phi(q^9) +2qf(q^3,q^{15}),
\end{equation}
	
\begin{equation}
\label{mod33}
f(q,q^5)= f(q^8,q^{16})+ qf(q^4,q^{20}).
\end{equation}
Next, we define 
\begin{equation}
a(q):= \sum_{x,y}q^{x^2+xy+y^2}
\end{equation}
and
\begin{equation}
c(q):= \sum_{x,y}q^{x^2+xy+y^2+x+y}.
\end{equation}
These functions were extensively studied in recent literature \cite{bch}, \cite{bern}, \cite{hgb} and \cite{shen}.
I will record below some useful formulas from \cite{berk0}, \cite{bern} and \cite{hgb}:
\begin{equation}
\label{1.17}
\phi(q)^2=\phi(q^2)^2+4q\psi(q^4)^2,
\end{equation}

\begin{equation}
\label{modeqn}
\phi(q)^4-\phi(q^3)^4=8qf(q,q^5)^3\phi(q^3),
\end{equation}

\begin{equation}
\label{evoddis}
\phi(q)\phi(q^3) = a(q^4)+2q\psi(q^2)\psi(q^6),
\end{equation}

\begin{equation}
\label{aq1}
a(q) = a(q^4) + 6q \psi(q^2)\psi(q^6),
\end{equation}

\begin {equation}
\label{aq3}
\psi(q)\psi(q^3) = \psi(q^4)\phi(q^{6})+q\phi(q^2)\psi(q^{12}),
\end{equation}

\begin{equation}
\label{aq2}
4\phi(q^3)\phi(q)a(q^2)-\phi(q)^4 = 3\phi(q^3)^4,
\end{equation}

\begin{equation}
\label{cq2}
c(q)= 3\df{E^3(q^3)}{E(q)}.
\end{equation}
At this point, it is expedient to introduce a projection operator $P_{t,s}$. 
It is defined by its action on power series as follows.
Let $t,s$ be non-negative integers such that $0\leq s<t$. Then
\begin{equation}
P_{t,s}\sum_{n\geq 0}c(n)q^n:  = \sum_{n\geq 0}c(tn +s)q^{tn +s}.
\end{equation}
Next, we observe that
\begin {equation}
\label{neq1}
 P_{2,1} f(q^{3},q^{15})\phi(q^3)=\sum_{x\not\equiv y\imod{2}} q^{(3x+1)^2+3y^2-1}=\sum_{x,y}q^{(3x+1)^2+ 3(x+2y+1)^2-1} = q^3 c(q^{12}).
\end{equation}
Replacing $q^3$ by $q$ in \eqref{neq1}, we find 
\begin {equation}
\label{neq2}
 P_{2,1} f(q,q^{5})\phi(q) = qc(q^4).
\end{equation}
We note in passing, that \eqref{neq2} follows easily from 
\begin {equation}
\label{aq4}
f(q,q^5)\phi(q)= q c(q^4) +  \psi(q^2)f(q^2,q^4),
\end{equation}
but we will not take the time to prove it. 
A motivated reader is invited to examine \cite{bch} and \cite{hgb}  for many similar relations.

With the aid of \eqref{mod31} it is straightforward to verify that 
\begin{equation}
\label{easy}
P_{3,1} \phi(q)^2\phi(q^3) = 4 q f(q^3,q^{15})\phi(q^3)\phi(q^9). 
\end{equation}
Next, we employ \eqref{mod33}, \eqref{evoddis}, \eqref{aq1}, \eqref{aq3}, \eqref{neq2} along with 
\eqref{easy} to deduce
\begin{equation}
\label{hard}
P_{24,1} \phi(q)^2\phi(q^3) = 4q f(q^{24},q^{48})a(q^{48}) + 8 q^{25}\psi(q^{72})c(q^{48}).
\end{equation}
We would like to transform \eqref{hard} into 
\begin{equation}
\label{harder}
P_{24,1} \phi(q)^2\phi(q^3) = 4q \df{E(q^{24})^5}{E(q^{48})^2} + 16 q^{25}\psi(q^{72})c(q^{48}).
\end{equation}
This equation will play an important role in establishing Theorem \ref{b30}.
\begin{proof}
Comparing \eqref{hard} and \eqref{harder} we see that all that is needed to show is   
\begin{equation}
\label{step1}
a(q^2) =   \df{E(q)^5}{f(q,q^2)E(q^2)^2} + 2q\psi(q^3)\df{c(q^2)}{f(q,q^2)}.  
\end{equation}
Replacing $q$ by $-q$ in \eqref{step1} and multiplying both sides by $-4\phi(q^3)\phi(q)$ 
with the aid of \eqref{Eneg}, \eqref{phi}, \eqref{psineg}, \eqref{f12}, \eqref{f15} and \eqref{cq2}, we get   
\begin{equation}
\label{step2}
3\phi(q)^4-  (4\phi(q^3)\phi(q)a(q^2)- \phi(q)^4)  = 24q f(q,q^5)^3\phi(q^3).
\end{equation}
Making use of \eqref{aq2} we arrive at \eqref{modeqn}. The proof of \eqref{harder} is now complete.
\end{proof}

\section{Proof of Theorems \ref{b10}, \ref{b20} and \ref{b30}}
\label{threethm}

We begin by rewriting Theorem \ref{b10} as
\begin{theorem}
\label{b11}
Let $n$ be a positive integer. Then,
\begin{equation}
\label{chui}
[q^{8n+1}]q^9\psi(q^8)\psi(q^{32})^2 \geq0,
\end{equation}
with equality if and only if $8n+1= M^2$ and all prime divisors of $M$ are congruent to $1\imod 4$.
\end{theorem}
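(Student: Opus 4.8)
The plan is to establish the equivalent theta form stated in Theorem \ref{b11}; Theorem \ref{b10} then follows from the elementary Gauss–sum evaluations $\sum_{x\equiv 1\imod{4}}q^{x^2}=q\psi(q^8)$ and $\sum_{y\equiv 2\imod{8}}q^{y^2}=q^4\psi(q^{32})$, which rewrite the restricted three–variable sum as $q^9\psi(q^8)\psi(q^{32})^2$.

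The heart of the matter is a single generating–function identity,
\[
8q^9\psi(q^8)\psi(q^{32})^2 = q\psi(q^8)\phi(q^8)^2 - qE(q^8)^3 .
\]
To produce it, I would first apply \eqref{1.17} twice, once with $q$ replaced by $q^8$ and once with $q$ replaced by $-q^8$, and subtract: the terms $\phi(q^{16})^2$ and $\psi(q^{32})^2$ are even in $q^8$, while the term carrying the explicit factor $q^8$ flips sign, so $\phi(q^8)^2-\phi(-q^8)^2 = 8q^8\psi(q^{32})^2$. Multiplying by $q\psi(q^8)$ and invoking \eqref{lucky} in the form $\psi(q^8)\phi(-q^8)^2 = E(q^8)^3$ yields the displayed identity. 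Manufacturing the factor $\phi(-q^8)^2$, so that \eqref{lucky} becomes applicable, is the one genuinely clever move; I expect this to be the main obstacle, since everything downstream is routine.

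Next I would interpret the two pieces arithmetically. Expanding $\phi(q)^3=(\phi(q^4)+2q\psi(q^8))^3$ via \eqref{phevod} and then using \eqref{1.17} (at $q^4$) to extract the residue classes $\equiv 0\imod{8}$ inside $\phi(q^4)^2$ gives $P_{8,1}\phi(q)^3 = 6q\psi(q^8)\phi(q^8)^2$, hence $[q^{8n+1}]q\psi(q^8)\phi(q^8)^2=\tfrac16(1,1,1,0,0,0;8n+1)$. For the second piece, \eqref{Jac1} identifies $qE(q^8)^3=\sum_{m>0}\left(\frac{-4}{m}\right)mq^{m^2}$, supported on odd squares. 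Combining, I obtain
\[
8\,[q^{8n+1}]q^9\psi(q^8)\psi(q^{32})^2 = \frac{(1,1,1,0,0,0;8n+1)}{6} - [q^{8n+1}]\sum_{m>0}\left(\frac{-4}{m}\right)mq^{m^2}.
\]

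Finally I would settle the inequality. If $8n+1$ is not a perfect square the subtracted term vanishes, and since $8n+1\equiv 1\imod{8}$ is never of the excluded form $4^v(8m+7)$, Theorem \ref{LG0} gives $(1,1,1,0,0,0;8n+1)>0$, so the coefficient is strictly positive. If $8n+1=M^2$ with $M$ odd, the right-hand side becomes $\tfrac16(1,1,1,0,0,0;M^2)-\left(\frac{-4}{M}\right)M$; by \eqref{hc1} we have $\tfrac16(1,1,1,0,0,0;M^2)\ge M$, with equality exactly when every prime divisor of $M$ is $\equiv 1\imod{4}$. In that case $\left(\frac{-4}{M}\right)=1$ and the two terms cancel, giving equality, whereas otherwise $\tfrac16(1,1,1,0,0,0;M^2)>M\ge\left(\frac{-4}{M}\right)M$ forces strict positivity. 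This reproduces the claimed equality condition and finishes the proof.
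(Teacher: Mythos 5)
Your proposal is correct and follows essentially the same route as the paper: the same identity $q\phi(q^8)^2\psi(q^8)-qE(q^8)^3=8q^9\psi(q^8)\psi(q^{32})^2$ obtained from \eqref{1.17} and \eqref{lucky}, the same dissection $P_{8,1}\tfrac{\phi(q)^3}{6}=q\phi(q^8)^2\psi(q^8)$, and the same appeal to \eqref{Jac1}, the Three-square theorem, and \eqref{hc1}. The only cosmetic difference is that you keep $\left(\tfrac{-4}{M}\right)$ where the paper rewrites it as $\left(\tfrac{-1}{M}\right)$, which is the same thing for odd $M$.
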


\begin{proof}
Replace $q$ by $-q$ in \eqref{1.17} to get 
\begin{equation}
\label{k0}
\phi(-q)^2 = \phi(q^2)^2 - 4q\psi(q^4)^2.
\end{equation}
Subtracting \eqref{k0} from \eqref{1.17} and replacing $q$ by $q ^8$ we find  
\begin{equation}
\label{k1}
\phi(q^8)^2-\phi(-q^8)^2 = 8 q^8\psi(q^{32})^2.
\end{equation}
Hence
\begin{equation}
\label{k2}
\phi(q^8)^2-8q^8\psi(q^{32})^2 = \phi(-q^8)^2.
\end{equation}
Multiplying both sides of \eqref{k2} by $q\psi(q^8)$ and employing \eqref{lucky} we obtain 
\begin{equation}
\label{k3}
q\phi(q^{8})^2\psi(q^8) -qE(q^8)^3 = 8q^9\psi(q^{32})^2\psi(q^8)
\end{equation}
It is straightforward to verify that 
\begin{equation}
\label{zzz}
P_{4,1}\frac{\phi(q)^3}{6} = q \phi(q^4)^2\psi(q^8).
\end{equation}
With the aid of \eqref{1.17} and \eqref{zzz} we obtain 
\begin{equation}
\label{g}
P_{8,1}\frac{\phi(q)^3}{6} = q \phi(q^8)^2\psi(q^8),
\end{equation}
\begin{equation}
\label{zzz1}
P_{8,5}\frac{\phi(q)^3}{24}=q^5 \psi(q^{16})^2\psi(q^8).
\end{equation}

Combining \eqref{Jac1} and \eqref{k3}, \eqref{g} we find that 
\begin{equation}
\label{t1}
P_{8,1}\left(\frac{\phi(q)^3}{6}-\sum_{n>0}\left(\df{-4}{n}\right) n q^{n^2}\right) = 8 q^9\psi(q^{32})^2\psi(q^{8}).
\end{equation}
Therefore, if $8n+1$ is not a perfect square, then 
 \begin{equation}
[q^{8n+1}]q^9\psi(q^8)\psi(q^{32})^2 = \frac{(1,1,1,0,0,0;8n+1)}{48} >0,
\end{equation}
by \eqref{lg0}.
If $8n+1$ = $M^2$, with $M$ being a positive  integer, then
\begin{equation}
\left(\df{-4}{M}\right)=\left(\df{-1}{M}\right)
\end{equation}
and 
\begin{equation}
[q^{M^2}]8q^9\psi(q^8)\psi(q^{32})^2 = \frac{(1,1,1,0,0,0;M^2)}{6} - \left(\df{-1}{M}\right) M \geq0,
\end{equation}
with equality if and only if all prime divisors of $M$ are congruent to $1\imod 4$, by \eqref{hc1}. 
\end{proof}

Next, we turn to Theorem \ref{b20}. We rewrite it as 
\begin{theorem}
\label{b21}
Let $n$ be a positive integer. Then,
\begin{equation}
[q^{8n+1}]q^{17}\phi(q^8)\psi(q^8)\psi(q^{128})\geq0,
\end{equation}
with equality if and only if $8n+1$= $E^2$ and all prime divisors of $E$ are congruent to $1$ or $3$ modulo $8$.
\end{theorem}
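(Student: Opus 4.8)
The plan is to imitate the proof of Theorem~\ref{b11}, now built around the theta series $\vartheta(1,1,2,0,0,0,q)=\phi(q)^2\phi(q^2)$ and the correction series furnished by \eqref{Jac2}. Concretely, I aim to establish the master identity
\[
P_{8,1}\left(\df{\phi(q)^2\phi(q^2)}{4}-\sum_{n>0}\left(\df{-2}{n}\right)nq^{n^2}\right)=4q^{17}\phi(q^8)\psi(q^8)\psi(q^{128}),
\]
whose right-hand side manifestly has non-negative coefficients. Granting this, Theorem~\ref{b21} follows by extracting $[q^{8n+1}]$ and splitting into the cases ``$8n+1$ a square'' and ``$8n+1$ not a square,'' exactly as before.

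First I would compute the projection side. By \eqref{1.17}, $\phi(q)^2\phi(q^2)=\phi(q^2)^3+4q\psi(q^4)^2\phi(q^2)$, and the purely even term $\phi(q^2)^3$ is annihilated by $P_{8,1}$, so $P_{8,1}\tfrac{1}{4}\phi(q)^2\phi(q^2)=P_{8,1}\,q\psi(q^4)^2\phi(q^2)$. The key simplification is $\psi(q)^2=\phi(q)\psi(q^2)$, which follows from \eqref{phi} and \eqref{psi}; replacing $q$ by $q^4$ gives $\psi(q^4)^2=\phi(q^4)\psi(q^8)$. A residue-class analysis modulo $8$ of the exponents in $q\phi(q^4)\psi(q^8)\phi(q^2)$ then shows the surviving terms are exactly those with the $\phi(q^4)$- and $\phi(q^2)$-indices even, whence
\[
P_{8,1}\df{\phi(q)^2\phi(q^2)}{4}=q\,\phi(q^8)\phi(q^{16})\psi(q^8).
\]

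Next I would treat the correction side. The analogue of \eqref{lucky} needed here is $\phi(q^8)^2\psi(-q^8)=E(-q^8)^3$, which is \eqref{lucky} with $q^8$ replaced by $-q^8$ (equivalently, it drops out of \eqref{Eneg}, \eqref{phi} and \eqref{psineg}); combined with \eqref{Jac2} this gives $qE(-q^8)^3=q\phi(q^8)^2\psi(-q^8)$, a series fixed by $P_{8,1}$ since $\left(\df{-2}{n}\right)$ vanishes unless $n$ is odd, and odd squares are $\equiv1\imod 8$. Subtracting the two closed forms gives $q\phi(q^8)\big(\psi(q^8)\phi(q^{16})-\phi(q^8)\psi(-q^8)\big)$. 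Now \eqref{lucky2} with $q\mapsto q^8$ reads $\phi(-q^{16})\psi(q^8)=\phi(q^8)\psi(-q^8)$, so the bracket becomes $\psi(q^8)\big(\phi(q^{16})-\phi(-q^{16})\big)$; and \eqref{phevod} applied with $q\mapsto q^{16}$ (and with $q^{16}\mapsto-q^{16}$) gives $\phi(q^{16})-\phi(-q^{16})=4q^{16}\psi(q^{128})$. Assembling these pieces produces the right-hand side $4q^{17}\phi(q^8)\psi(q^8)\psi(q^{128})$ and completes the master identity.

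Finally I would read off the theorem. Since the two sides agree and the right side has non-negative coefficients, $[q^{8n+1}]q^{17}\phi(q^8)\psi(q^8)\psi(q^{128})\ge 0$ is immediate; it remains to locate the zeros. If $8n+1$ is not a perfect square the correction contributes $0$, and as an odd number is never of the form $4^v2(8m+7)$, Theorem~\ref{Dick86} makes the coefficient strictly positive. If $8n+1=E^2$ with $E$ odd, the correction is $\left(\df{-2}{E}\right)E$, so the extracted coefficient equals $\tfrac{1}{4}(1,1,2,0,0,0;E^2)-\left(\df{-2}{E}\right)E$; by \eqref{hc2} this is $\ge 0$, and it vanishes iff $\tfrac{1}{4}(1,1,2,0,0,0;E^2)=E$ with $\left(\df{-2}{E}\right)=1$, which by \eqref{hc2} holds precisely when every prime divisor of $E$ is $\equiv 1$ or $3\imod 8$ (in which case each $\left(\df{-2}{p}\right)=1$, so $\left(\df{-2}{E}\right)=1$ indeed). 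The main obstacle is the first computation: pinning $P_{8,1}\tfrac{1}{4}\phi(q)^2\phi(q^2)$ down to the clean product $q\phi(q^8)\phi(q^{16})\psi(q^8)$ via $\psi(q^4)^2=\phi(q^4)\psi(q^8)$ and the modulo-$8$ exponent bookkeeping; once that is secured, the reduction of the difference to the target through \eqref{lucky2} and \eqref{phevod} is routine.
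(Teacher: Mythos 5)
Your proposal is correct and follows essentially the same route as the paper: the same master identity $P_{8,1}\bigl(\tfrac14\phi(q)^2\phi(q^2)-\sum_{n>0}(\tfrac{-2}{n})nq^{n^2}\bigr)=4q^{17}\phi(q^8)\psi(q^8)\psi(q^{128})$, established from \eqref{Jac2}, \eqref{lucky}, \eqref{lucky2} and the even/odd dissection \eqref{phevod}, followed by the identical case split via Theorem \ref{Dick86} and \eqref{hc2}. The only difference is cosmetic: you derive the projection formula \eqref{gg} explicitly (via $\psi(q^4)^2=\phi(q^4)\psi(q^8)$ and a residue count) where the paper calls it ``straightforward to verify,'' and you assemble the difference by factoring rather than by multiplying the dissection through by $\phi(q)\psi(q)$ first.
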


\begin{proof}
Replace $q$ by $-q$ in \eqref{phevod} to get 
\begin{equation}
\label{c0}
\phi(-q)= \phi(q^4)-2q\psi(q^8).
\end{equation}
Subtracting \eqref{phevod} from \eqref{c0} and replacing $q$ by $q ^2$ we find 
\begin{equation}
\label{c1}
\phi(-q^2) = \phi(q^2) - 4q^2\psi(q^{16}).
\end{equation}
Multiply both sides \eqref{c1} by $\phi(q)\psi(q)$ and use \eqref{lucky2} on the left to get 
\begin{equation}
\label{c3}
\phi(q)^2\psi(-q) = \phi(q^2)\phi(q)\psi(q) - 4q^2\psi(q^{16})\phi(q)\psi(q).
\end{equation}
Replacing $q$ by $q^8$ in \eqref{c3} and multiplying both sides by $q$ we obtain
\begin{equation}
\label{c4}
q\phi(q^8)^2\psi(-q^8) = q\phi(q^{16})\psi(q^8)\phi(q^8) - 4q^{17}\phi(q^8)\psi(q^8)\psi(q^{128}).
\end{equation}
Clearly \eqref{lucky} implies  that
\begin{equation}
\label{c5}
q\phi(q^8)^2\psi(-q^8) = qE(-q^8)^3,
\end{equation}
and so we established that 
\begin{equation}
\label{id112}
q\psi(q^8)\phi(q^8)\phi(q^{16})- qE(-q^8)^3 = 4q^{17}\phi(q^8)\psi(q^8)\psi(q^{128})
\end{equation}
holds true.
It is straightforward to verify that 
\begin{equation}
\label{gg}
 P_{8,1}\frac{\phi(q)^2\phi(q^2)}{4} = q\psi(q^8)\phi(q^8)\phi(q^{16}).
\end{equation}
Combining \eqref{Jac2} and \eqref{id112}, \eqref{gg} we find that
\begin{equation}
\label{t2}
P_{8,1}\left(\frac{\phi(q)^2\phi(q^2)}{4}- \sum_{n>0}\left(\df{-2}{n}\right) n q^{n^2}\right) = 4q^{17}\phi(q^8)\psi(q^8)\psi(q^{128}).
\end{equation}
And so, if $8n+1$ is not a perfect square, then   
\begin{equation}
\label{g2}
[q^{8n+1}]q^{17}\phi(q^8)\psi(q^8)\psi(q^{128}) = \frac{(1,1,2,0,0,0; 8n+1)}{16}> 0,
\end{equation} 
by \eqref{dick86}.
If $8n+1$ = $E^2$, with $E$ being positive  integer, then 
\begin{equation}
\label{g3}
[q^{E^2}]4q^{17}\phi(q^8)\psi(q^8)\psi(q^{128}) = \frac{(1,1,2,0,0,0;E^2)}{4}-E\left(\df{-2}{E}\right)\ge0,
\end{equation}
with equality if and only if all prime divisors of $E$ are congruent to $1$ or $3$ modulo $8$, by \eqref{hc2}. 
\end{proof}

It remains to prove Theorem \ref{b30}. We begin by rewriting it as 
\begin{theorem}
\label{b31}
Let $n$ be a positive integer. Then,
\begin{equation}
[q^{24n+1}]q^{25}\psi(q^{72})c(q^{48})\geq0,
\end{equation}  
with equality if and only if $24n+1$ = $W^2$ and all prime divisors of $W$ are congruent to $1$ modulo $3$.
\end{theorem}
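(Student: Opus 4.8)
The plan is to mirror the proofs of Theorems \ref{b11} and \ref{b21}, using the previously established master identity \eqref{harder} together with the Hurwitz-type inequality \eqref{hc3}. The key observation is that the generating function $q^{25}\psi(q^{72})c(q^{48})$ appearing in Theorem \ref{b31} is exactly the ``remainder'' term on the right-hand side of \eqref{harder}, so the entire strategy is to isolate this term and interpret it arithmetically. First I would recall that \eqref{harder} states
\begin{equation*}
P_{24,1}\,\phi(q)^2\phi(q^3) = 4q\,\df{E(q^{24})^5}{E(q^{48})^2} + 16 q^{25}\psi(q^{72})c(q^{48}),
\end{equation*}
and that by \eqref{scqpi} the first term on the right is precisely $4\sum_{n>0}\left(\df{-12}{n}\right)nq^{n^2}$ after the substitution $q\mapsto q^{24}$, i.e.\ $4q\,E(q^{24})^5/E(q^{48})^2 = 4\sum_{n>0}\left(\df{-12}{n}\right)n\,q^{24n^2}$ once reindexed to the arithmetic progression $24n+1$. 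Solving \eqref{harder} for the remainder term then gives
\begin{equation*}
16 q^{25}\psi(q^{72})c(q^{48}) = P_{24,1}\!\left(\phi(q)^2\phi(q^3) - 4\sum_{n>0}\left(\df{-12}{n}\right)n\,q^{n^2}\right).
\end{equation*}

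Next I would identify the left projection $P_{24,1}\,\phi(q)^2\phi(q^3)$ arithmetically. Since $\phi(q)^2\phi(q^3) = \sum_{x,y,z} q^{x^2+y^2+3z^2}$, extracting the coefficient of $q^{24n+1}$ counts representations of $24n+1$ by the form $(1,1,3,0,0,0)$. The congruence conditions $x\equiv3\imod{12}$, $y\equiv z\imod6$, $y+z\equiv2\imod6$ in the original statement of Theorem \ref{b30} are simply the parametrization of which residue classes of $(x,y,z)$ contribute to the $24n+1$ progression under the change of variables matching $x^2+4y^2+12z^2$ to $3z'^2+x'^2+y'^2$; verifying that this sieving reproduces $P_{24,1}\,\phi(q)^2\phi(q^3)/\text{(suitable constant)}$ is a bookkeeping step. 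With the identification in hand, the coefficient extraction splits into two cases exactly as before: if $24n+1$ is \emph{not} a perfect square, then $\left(\df{-12}{\cdot}\right)$ contributes nothing and the coefficient equals a positive multiple of $(1,1,3,0,0,0;24n+1)$, which is strictly positive by Theorem \ref{Dick89} (note $24n+1$ is never of the forbidden form $9^v3(3m+2)$ since it is coprime to $3$). If $24n+1 = W^2$, then
\begin{equation*}
[q^{W^2}]\,16q^{25}\psi(q^{72})c(q^{48}) = \df{(1,1,3,0,0,0;W^2)}{4} - \left(\df{-3}{W}\right)W \ge 0,
\end{equation*}
with equality iff all prime divisors of $W$ are congruent to $1\imod3$, by \eqref{hc3}, after checking that $\left(\df{-12}{W}\right) = \left(\df{-3}{W}\right)$ for $W$ coprime to $6$.

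The step I expect to be the main obstacle is the arithmetic matching of the left-hand projection with the quadratic form count, specifically confirming that $P_{24,1}\,\phi(q)^2\phi(q^3)$ restricts the triple $(x,y,z)$ to exactly the congruence classes stated in Theorem \ref{b30} and that the overall constant ($16$ here versus $6$ and $4$ in the earlier proofs) is correct. Unlike the $\phi,\psi$-only identities of Theorems \ref{b11} and \ref{b21}, here the modulus $24$ mixes the primes $2$ and $3$, so the dissection requires the full strength of \eqref{harder} rather than a single elementary theta split; I would double-check the Jacobi-symbol normalization $\left(\df{-12}{n}\right)$ against $\left(\df{-3}{n}\right)$ and the factor of $4$ multiplying the Eisenstein-type term, since an error there would propagate into the equality condition. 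Once these constants and congruences are pinned down, the positivity and the sharp equality characterization follow immediately from Theorem \ref{Dick89} and inequality \eqref{hc3}, completing the proof.
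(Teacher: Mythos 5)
Your proposal is correct and follows essentially the same route as the paper: isolate $16q^{25}\psi(q^{72})c(q^{48})$ from \eqref{harder} via \eqref{scqpi}, then invoke Theorem \ref{Dick89} for non-squares and \eqref{hc3} for $24n+1=W^2$, using $\left(\df{-12}{W}\right)=\left(\df{-3}{W}\right)$ for $W$ coprime to $6$. The only blemish is the constant in your square-case display (it should read $[q^{W^2}]\,4q^{25}\psi(q^{72})c(q^{48})=\df{(1,1,3,0,0,0;W^2)}{4}-\left(\df{-3}{W}\right)W$, as in the paper's \eqref{JPEG3}), which you yourself flag and which does not affect the sign or the equality characterization.
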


\begin{proof}
From \eqref{scqpi} and \eqref{harder} we have 
\begin{equation}
\label{JPEG1}
P_{24,1} \left(\phi(q)^2\phi(q^3) -4\sum_{n>0}\left(\df{-12}{n}\right) n q^{n^2}\right) = 16 q^{25}\psi(q^{72})c(q^{48}).
\end{equation}
If $24n+1$ is not a perfect square, then  
\begin{equation}
\label{JPEG2}
[q^{24n+1}]q^{25}\psi(q^{72})c(q^{48})=\df{(1,1,3,0,0,0; 24n+1)}{16}>0,
\end{equation}
by \eqref{dick89}. If $24n+1$= $W^2$, with $W$ being positive integer, then  
\begin{equation}
\label{JPEG3}
[q^{W^2}]4 q^{25}\psi(q^{72})c(q^{48}) = \df{(1,1,3,0,0,0;W^2)}{4}-W\left(\df{-3}{W}\right)\ge0,
\end{equation}
with equality if and only if all prime divisors of $W$ are congruent to $1$ modulo $3$, by \eqref{hc3}. 
\end{proof}

\section{On spinor-regular ternary forms of discriminant $16384$}
\label{four}
 
According to Dickson \cite{dickson}, a positive definite ternary quadratic form is said to be regular if it represents 
all natural integers not excluded by congruences; that is, it represents all integers represented by its genus. 
We now know that there are exactly $913$ regular ternary quadratic forms \cite{jagykap}. 
In \cite{jp}, Jones and Pall discussed genus companions of certain regular diagonal forms. 
These companions are almost regular, but they fail to represent an infinite set of natural numbers in the same square-class. \\
For example, ternary quadratic form $(3,4,9,0,0,0)$ happens to be almost regular. It is the only genus mate of regular form $(1,3,36,0,0,0)$.
To demonstrate the usefulness of a special case of the quintuple product identity ~\eqref{scqpi2}, we digress a little by taking the time to prove the following
\begin{lemma}
Let $n$ be a non-negative integer. Then
 \begin{equation}
\label{4.0}
(3,4,9,0,0,0;n)\ge(1,3,36,0,0,0;n),
\end{equation}
provided $n$ is not of the form $(6j+r)^2$ with $j\in \mathbb{N}$ and $r=1,2$.
Moreover, 
\begin{equation}
\label{4.00}
(1,3,36,0,0,0;(6j+r)^2)>(3,4,9,0,0,0;(6j+r)^2),
\end{equation}
\begin{equation}
\label{4.001}
(3,4,9,0,0,0;(6j+r)^2)\ge0,
\end{equation}
with equality if and only if all prime divisors of $6j+r$ are congruent to $1$ modulo $3$.
\end{lemma}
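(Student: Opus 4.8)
The plan is to work with the generating functions directly. Since both forms are diagonal, their theta series factor as products of $\phi$'s: by \eqref{phi} one has $\vartheta(3,4,9,0,0,0,q)=\phi(q^3)\phi(q^4)\phi(q^9)$ and $\vartheta(1,3,36,0,0,0,q)=\phi(q)\phi(q^3)\phi(q^{36})$. Writing $D(q)$ for their difference, the common factor $\phi(q^3)$ pulls out,
\[
D(q)=\phi(q^3)\bigl(\phi(q^4)\phi(q^9)-\phi(q)\phi(q^{36})\bigr).
\]
The heart of the argument is the single theta identity
\[
D(q)=-2q\,\phi(q^3)E(q^{12})^2,
\]
which by \eqref{scqpi2} is the same as $D(q)=-2\sum_{n>0}(-1)^{n+1}\left(\df{-3}{n}\right)nq^{n^2}$. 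Every assertion of the lemma will be read off from this one formula, so establishing it is the whole game.

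To prove the identity I would cancel $\phi(q^3)$ and reduce to $\phi(q^4)\phi(q^9)-\phi(q)\phi(q^{36})=-2qE(q^{12})^2$. Inserting and subtracting $\phi(q^4)\phi(q^{36})$ and applying the dissection \eqref{phevod} once with argument $q$ and once with argument $q^9$ (so that $\phi(q^4)-\phi(q)=-2q\psi(q^8)$ and $\phi(q^9)-\phi(q^{36})=2q^9\psi(q^{72})$), the difference collapses to $2q^9\phi(q^4)\psi(q^{72})-2q\phi(q^{36})\psi(q^8)$. Dividing by $-2q$ and setting $u=q^4$, what remains is the clean relation
\[
\phi(u^9)\psi(u^2)-u^2\phi(u)\psi(u^{18})=E(u^3)^2,
\]
a Schr\"oter/quintuple-product-type identity. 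I expect THIS to be the main obstacle: proving it rigorously, rather than merely matching initial coefficients, via the quintuple product identity \eqref{qpi} or a further $3$-dissection of $\psi$ and $\phi$, is the only genuinely nonroutine step; everything else is bookkeeping.

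Granting the identity, the sign analysis is immediate. By \eqref{scqpi2} the coefficient $[q^N]D(q)$ vanishes unless $N=m^2$ is a perfect square, in which case it equals $-2(-1)^{m+1}\left(\df{-3}{m}\right)m$. The summand $(-1)^{n+1}\left(\df{-3}{n}\right)n$ equals $+m$ for $m\equiv 1,2\pmod 6$, equals $-m$ for $m\equiv 4,5\pmod 6$, and vanishes for $3\mid m$, so $[q^N]D(q)$ is strictly negative exactly when $N=m^2$ with $m\equiv 1,2\pmod 6$, i.e. $N=(6j+r)^2$ with $r=1,2$, and is nonnegative for every other $N$ (where it is either $0$ or $+2m$). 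This is precisely \eqref{4.0} together with the reversed strict inequality \eqref{4.00}.

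Finally, \eqref{4.001} follows by combining the identity with \eqref{hc4}. For $N=W^2$ with $W=6j+r$ (so $W\not\equiv 0\pmod 3$), the identity yields $(1,3,36,0,0,0;W^2)-(3,4,9,0,0,0;W^2)=2W$, while \eqref{hc4} gives $\tfrac12\bigl((1,3,36,0,0,0;W^2)+(3,4,9,0,0,0;W^2)\bigr)\ge W$, with equality exactly when all prime divisors of $W$ are congruent to $1\pmod 3$. Subtracting $W$ from the latter and using the former, $(3,4,9,0,0,0;W^2)=\tfrac12\bigl((1,3,36,0,0,0;W^2)+(3,4,9,0,0,0;W^2)\bigr)-W\ge 0$, with the same equality condition, which completes the proof.
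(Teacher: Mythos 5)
Your overall architecture is sound, and your endgame is exactly the paper's: once one knows that $\vartheta(1,3,36,0,0,0,q)-\vartheta(3,4,9,0,0,0,q)=2q\phi(q^3)E(q^{12})^2$, the quintuple-product consequence \eqref{scqpi2} shows the difference is supported on perfect squares with coefficient $2(-1)^{m+1}\left(\frac{-3}{m}\right)m$ at $q^{m^2}$; your case analysis of this sign modulo $6$ is correct, and your derivation of \eqref{4.001} by averaging the two representation counts and invoking \eqref{hc4} is precisely the argument in the paper. Your reduction of the key identity via the dissection \eqref{phevod} to
\begin{equation*}
\phi(u^9)\psi(u^2)-u^2\phi(u)\psi(u^{18})=E(u^3)^2
\end{equation*}
is also algebraically correct, and the identity itself is true (I checked it to moderate order).

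The genuine gap is that you never prove this identity: you explicitly defer it as ``the main obstacle,'' and it is the entire nontrivial content of the lemma, so the proof has a hole exactly where the work lies. It is not an identity that falls out of the routine dissections of Section 2; it encodes the fact that the difference of the theta series of the two binary forms $x^2+36y^2$ and $4x^2+9y^2$ (same genus, discriminant $-144$) is a CM eta-quotient, which is a weight-one statement requiring its own argument. The paper handles this by quoting Theorem 2.1 of \cite{bp} with $s=1$, $m=12$, which yields
\begin{equation*}
\sum_{x,y}q^{72x^2+12xy+y^2}-\sum_{x,y}q^{72x^2+60xy+13y^2}=2qE(q^{12})^2,
\end{equation*}
and then identifies the two binary theta series with $\phi(q)\phi(q^{36})$ and $\phi(q^4)\phi(q^9)$ via the unimodular substitutions $x\mapsto y$, $y\mapsto -x-6y$ and $x\mapsto -x+y$, $y\mapsto 2x-3y$. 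To close your argument you would need either to cite such a result as well, or to actually carry out the quintuple-product (or Schr\"oter-type) derivation you gesture at; as you rightly note yourself, matching initial coefficients does not suffice.
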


\begin{proof}
We begin by setting $s=1,m=12$ in [Thm. 2.1, \cite{bp}] to get 

\begin{equation}
\label{4.1}
\sum_{x,y}q^{72x^2 +12xy+y^2 }-\sum_{x,y}q^{72x^2 +60xy +13y^2} = 2qE(q^{12})^2.
\end{equation}
The unimodular transformation $x \mapsto y$, $y \mapsto -x-6y$ yields 
\begin{equation}
\label{4.2}
\sum_{x,y}q^{72x^2 +12xy+y^2 } = \phi(q)\phi(q^{36}).
\end{equation}
Analogously, with the aid of  $x \mapsto -x+y$, $y \mapsto 2x-3y$ we find that
\begin{equation}
\label{4.3}
\sum_{x,y}q^{72x^2 +60xy+13y^2 } = \phi(q^4)\phi(q^9).
\end{equation}
Hence 
\begin{equation}
\label{4.4}
\vartheta(1,3,36,0,0,0,q) - \vartheta(3,4,9,0,0,0,q)  = 2q\phi(q^3)E(q^{12})^2.
\end{equation}
From \eqref{scqpi2} and \eqref{4.4} it follows that 
\begin{equation}
\label{4.5}
(3,4,9,0,0,0;n)=(1,3,36,0,0,0;n),
\end{equation}
if $n$ is not a perfect square, and that  for $m\in\mathbb{N}$ 
\begin{equation}
\label{4.6}
(1,3,36,0,0,0;m^2)-(3,4,9,0,0,0;m^2) = 2 (-1)^{m+1}\left(\df{-3}{m}\right)m. 
\end{equation}
Hence 

\begin{equation}
\label{4.8}
(3,4,9,0,0,0;9m^2)= (1,3,36,0,0,0;9m^2).
\end{equation}
Finally, \eqref{hc4} implies that for $j\in\mathbb{N}$ 
\begin{equation}
\label{4.9}
(3,4,9,0,0,0;(6j+4)^2)> (1,3,36,0,0,0;(6j+4)^2)>0,
\end{equation}

\begin{equation}
\label{4.10}
(3,4,9,0,0,0;(6j+5)^2)> (1,3,36,0,0,0;(6j+5)^2)>0,
\end{equation}

\begin{equation}
\label{4.11}
(1,3,36,0,0,0;(6j+2)^2)>(3,4,9,0,0,0;(6j+2)^2)>0,
\end{equation}
\begin{equation}
\label{4.12}
(1,3,36,0,0,0;(6j+1)^2)>(3,4,9,0,0,0;(6j+1)^2),
\end{equation}
and 
\begin{equation}
\label{4.12v}
(3,4,9,0,0,0;(6j+1)^2)\ge0,
\end{equation}
with equality if and only if all prime divisors of $(6j+1)$ are congruent to $1$ modulo $3$.
\end{proof}
Hence   $3x^2+ 4y^2 +9z^2 $ represents exclusively all positive integers represented by $x^2+ 3y^2 +36z^2 $, which are not of the form $W^2$,
where  $W$ is generated by $1$ and primes congruent to $1$ modulo $3$. Recalling \cite[Table 5]{dickson},  we have
\begin{theorem}
\label{xoxo}
The form $3x^2+ 4y^2 +9z^2 $ represents exclusively all positive integers not of the form\\
$(3m+2),\\(4m+2),\\9^a(9m+6),\\W^2$,\\
where $a,m,W\in\mathbb{N}$ and $W$ is generated by $1$ and primes congruent to $1$ modulo $3$.
\end{theorem}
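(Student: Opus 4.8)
The plan is to read off the representation behaviour of the almost regular form $(3,4,9,0,0,0)$ from that of its regular genus mate $(1,3,36,0,0,0)$, treating non-square and square arguments separately. By \cite[Table 5]{dickson} the regular form $(1,3,36,0,0,0)$ represents exactly those positive integers that are not of the form $3m+2$, $4m+2$, or $9^a(9m+6)$; since the form is regular, these are precisely the values ruled out by congruence conditions on its genus. For $n$ not a perfect square, equation \eqref{4.5} gives $(3,4,9,0,0,0;n)=(1,3,36,0,0,0;n)$, so on non-squares the two forms represent exactly the same set of integers. Hence among non-squares the integers missed by $(3,4,9,0,0,0)$ are exactly those of the three congruence types listed above.

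It remains to handle perfect squares. First I would note that no perfect square is excluded by the congruence conditions: a square is never $\equiv 2\imod 3$ nor $\equiv 2\imod 4$, and since $9^a(9m+6)=3^{2a+1}(3m+2)$ has odd $3$-adic valuation while a perfect square has even $3$-adic valuation, no square is of the form $9^a(9m+6)$. Thus $(1,3,36,0,0,0;n)>0$ for every square $n$. The behaviour of $(3,4,9,0,0,0)$ on squares is then extracted directly from the Lemma: \eqref{4.8} disposes of squares divisible by $9$, and \eqref{4.9}, \eqref{4.10}, \eqref{4.11} show $(3,4,9,0,0,0;n)>0$ for $n=(6j+4)^2,(6j+5)^2,(6j+2)^2$ respectively. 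The only squares left are those of the form $(6j+1)^2$, where \eqref{4.12v} gives $(3,4,9,0,0,0;(6j+1)^2)\ge 0$ with equality exactly when every prime divisor of $6j+1$ is $\equiv 1\imod 3$.

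To conclude I would identify this exceptional family with the set of $W^2$ in the statement. Every prime $p\equiv 1\imod 3$ is odd, hence $p\equiv 1\imod 6$, so any $W$ built from $1$ and such primes satisfies $W\equiv 1\imod 6$; conversely $6j+1$ is coprime to $6$, its prime divisors are therefore $\equiv 1$ or $5\imod 6$, and requiring them all to be $\equiv 1\imod 3$ is exactly the requirement that $6j+1$ be one of these $W$. Consequently the squares not represented by $(3,4,9,0,0,0)$ are precisely the $W^2$. Because the three congruence families consist entirely of non-squares while the $W^2$ are squares, the complete set of integers not represented by $(3,4,9,0,0,0)$ is the union of $3m+2$, $4m+2$, $9^a(9m+6)$ and $W^2$, which is the claim. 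The one step needing genuine care is the bookkeeping that keeps the square and non-square cases disjoint, chiefly the valuation argument that squares avoid $9^a(9m+6)$ and the elementary check that $\{6j+1:\text{all prime factors}\equiv 1\imod 3\}$ equals the set $\{W\}$, since everything else is immediate from \eqref{4.5} and \eqref{4.8}--\eqref{4.12v}.
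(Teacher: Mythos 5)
Your proposal is correct and takes essentially the same approach as the paper, which derives the theorem in one line from the preceding Lemma together with Dickson's Table 5 characterization of the regular genus mate $(1,3,36,0,0,0)$. You simply make explicit the bookkeeping the paper leaves implicit: that the three congruence classes contain no squares, that \eqref{4.5}, \eqref{4.8}--\eqref{4.11} cover all non-squares and all squares except $(6j+1)^2$, and that the exceptional set from \eqref{4.12v} is exactly $\{W^2\}$.
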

\noi

Nowadays, the Jones-Pall forms are called spinor regular. The list containing $29$ spinor regular forms was put together in \cite{jagy}. It contains seven original Jones-Pall forms along with eight forms from \cite{behh} and ten forms from \cite{kap}. Jagy contributed four forms to this list. In addition, he checked all positive ternary form with discriminant up to $1,400,000$ looking for all the spinor regular forms, based on a recipe in \cite{chnernst}.
The list of $29$ forms is conjectured to be complete.

All $10$ of Kaplansky's forms in \cite{kap} can be easily handled by the methods discussed in this paper. 
Here, we focus on just two forms  $(9,16,36,16,4,8)$ and $(9,17,32,-8,8,16)$, both of discriminant $16384$.
We begin by proving the following  
\begin{lemma}
\begin{equation}
\label{identity2}
\vartheta(9,16,36,16,4,8 ,q)=\phi(q^{64})^3 + 2q^{16}\psi(q^{128})\phi(q^{64})^2 +8q^{36}\psi(q^{32})\psi(q^{128})^2 +2q^9\psi(q^8)\psi(q^{32})^2.
\end{equation}
\end{lemma}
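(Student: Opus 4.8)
The plan is to read $\vartheta(9,16,36,16,4,8,q)$ as the theta series of the lattice $L=(\mathbb{Z}^3,Q)$ with $Q(x,y,z)=9x^2+16y^2+36z^2+16yz+4xz+8xy$, and to dissect $L$ along an orthogonal sublattice on which $Q$ becomes a constant multiple of the standard sum of three squares. Concretely, I would first hunt for three vectors of $Q$-norm $64$ that are mutually orthogonal for the associated bilinear form $B(u,v)=\tfrac12\bigl(Q(u+v)-Q(u)-Q(v)\bigr)$. A short search produces $v_1=(0,2,0)$, $v_2=(2,0,-1)$, $v_3=(2,-1,1)$, each with $Q(v_i)=64$ and $B(v_i,v_j)=0$ for $i\neq j$. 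Hence on the sublattice $\Lambda_0=\mathbb{Z}v_1+\mathbb{Z}v_2+\mathbb{Z}v_3$ one has $Q(av_1+bv_2+cv_3)=64(a^2+b^2+c^2)$.

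Next I would pin down the quotient $\mathbb{Z}^3/\Lambda_0$. The matrix $[\,v_1\;v_2\;v_3\,]$ has determinant $\pm 8$, so the quotient has order $8$. Solving the linear system gives $e_1=\tfrac18 v_1+\tfrac14 v_2+\tfrac14 v_3$, so $8e_1=v_1+2v_2+2v_3\in\Lambda_0$ while no smaller multiple of $e_1=(1,0,0)$ lies in $\Lambda_0$; thus $e_1$ has order $8$ and $\mathbb{Z}^3/\Lambda_0$ is cyclic, generated by $e_1$. A full set of coset representatives is therefore $r_j=(j,0,0)$ for $j=0,1,\dots,7$, and $\vartheta=\sum_{j=0}^{7}\sum_{a,b,c}q^{Q(r_j+av_1+bv_2+cv_3)}$.

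The heart of the argument is the evaluation $Q(r_j+av_1+bv_2+cv_3)=Q(r_j)+64(a^2+b^2+c^2)+2aB(r_j,v_1)+2bB(r_j,v_2)+2cB(r_j,v_3)$. Using $B(r_j,v_1)=8j$, $B(r_j,v_2)=B(r_j,v_3)=16j$ and $Q(r_j)=9j^2$, completing the three squares makes the constant $9j^2$ cancel exactly against $-j^2-4j^2-4j^2$, leaving
\[
Q(r_j+av_1+bv_2+cv_3)=64\Bigl(a+\tfrac j8\Bigr)^2+64\Bigl(b+\tfrac j4\Bigr)^2+64\Bigl(c+\tfrac j4\Bigr)^2 .
\]
Writing $g_\alpha(q):=\sum_{n}q^{64(n+\alpha)^2}$, the $j$-th coset contributes $g_{j/8}(q)\,g_{j/4}(q)^2$.

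Finally I would identify the one-dimensional shifted thetas and regroup. One records $g_0=\phi(q^{64})$, $g_{1/4}(q)=q^4\psi(q^{32})$ (from $64(n+\tfrac14)^2=4+32(2n^2+n)$), and $g_{1/2}(q)=2q^{16}\psi(q^{128})$ (from $64(n+\tfrac12)^2=16(2n+1)^2$ together with $\sum_n q^{(2n+1)^2}=2q\psi(q^8)$ after $q\mapsto q^{16}$), as well as $g_{1/8}+g_{3/8}=q\psi(q^8)$, all using $g_\alpha=g_{1-\alpha}$. Grouping the eight cosets as $\{0\}$, $\{4\}$, $\{2,6\}$, $\{1,3,5,7\}$ then produces exactly $\phi(q^{64})^3$, $2q^{16}\psi(q^{128})\phi(q^{64})^2$, $8q^{36}\psi(q^{32})\psi(q^{128})^2$, and $2q^9\psi(q^8)\psi(q^{32})^2$, which is the claimed identity. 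The only genuinely nonroutine step is the first one, namely spotting the orthogonal norm-$64$ triple, equivalently recognizing that $L$ contains an index-$8$, cyclically generated orthogonal sublattice on which $Q$ is $64$ times the identity; once that is in hand, the $j^2$-cancellation and the $\phi$/$\psi$ bookkeeping are entirely mechanical.
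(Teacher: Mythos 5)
Your proof is correct; I verified the orthogonality of $v_1=(0,2,0)$, $v_2=(2,0,-1)$, $v_3=(2,-1,1)$ with respect to the Gram matrix of $(9,16,36,16,4,8)$, the index-$8$ cyclic quotient generated by $e_1$, the exact cancellation $9j^2-j^2-4j^2-4j^2=0$ after completing the squares, and the evaluations of the shifted theta series $g_{j/8}$, $g_{j/4}$; the four coset groups $\{0\}$, $\{4\}$, $\{2,6\}$, $\{1,3,5,7\}$ do reproduce the four terms of \eqref{identity2}. Your route is, however, organized differently from the paper's. The paper starts from the completion of squares $9x^2+16y^2+36z^2+16yz+4xz+8xy=(x+4y+2z)^2+8x^2+32z^2$, i.e.\ it realizes the form as the restriction of the diagonal form $u^2+8v^2+32w^2$ to a congruence sublattice, and then performs a two-stage parity dissection (first on $x$ mod $2$, then on $x$ versus $z$ mod $2$) with explicit unimodular substitutions such as $x\mapsto x+z$, $z\mapsto x-z$, $y\mapsto y-x$ to diagonalize each piece. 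You instead go the dual way: you locate a scalar sublattice $\Lambda_0$ of index $8$ on which $Q$ is $64(a^2+b^2+c^2)$ and sum over the eight cosets of the cyclic quotient. Your version is more systematic and transplants verbatim to the other Kaplansky forms treated in Section 4 (the paper's second computation, for $(9,17,32,-8,8,6)$, is visibly more laborious under the substitution method), at the cost of the one nonroutine step you flag, namely finding the orthogonal norm-$64$ triple; the paper's version requires only the single easily guessed identity \eqref{identity1} and elementary parity bookkeeping.
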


\begin{proof}
It is easy to check that 
\begin{equation}
\label{identity1}
9x^2+16y^2+36z^2+16yz+4xz+8xy = (x+ 4y+2z)^2+8x^2+32z^2.
\end{equation}
Hence  
\begin{equation}
\label{identity3}
\vartheta(9,16,36,16,4,8 ,q)=\sum_{x,y,z}q^{(2x+4y+2z)^2 + 8(2x)^2+32z^2}+\sum_{x,y,z} q^{(2x+1+4y+2z)^2 + 8(2x+1)^2+32z^2}.
\end{equation}
Next, 
\begin{equation}
\begin{aligned}
\sum_{x,y,z}q^{(2x+4y+2z)^2 + 8(2x)^2+32z^2} & = \sum_{\substack{x\equiv z\imod{2},\\y}}q^{(2x+4y+2z)^2 + 32x^2+32z^2} \\
& + \sum_{\substack{x\not\equiv z\imod{2},\\y}}q^{(2x+4y+2z)^2 + 32x^2+32z^2}.
\end{aligned}
\end{equation}

Using the substitutions $x \mapsto x+z$, $z \mapsto x-z$, $y \mapsto y-x$ and $x \mapsto x+z+1$, $z \mapsto x-z$, $y \mapsto y-x$ we find that
\begin{equation}
\label{identity4}
\begin{aligned}
\sum_{\substack{x\equiv z\imod{2},\\y}}q^{(2x+4y+2z)^2 + 32x^2+32z^2}&=\sum_{x,y,z}q^{16y^2+64x^2+64z^2} \\ 
& = \sum_{x,y,z}q^{64y^2+64x^2+64z^2}+\sum_{x,y,z}q^{16(2y+1)^2+64x^2+64z^2} \\
& = \phi(q^{64})^3 + 2q^{16}\psi(q^{128})\phi(q^{64})^2,
\end{aligned}
\end{equation}
and 
\begin{equation}
\label{identity5}
\begin{aligned}
\sum_{\substack{x\not\equiv z\imod{2},\\y}}q^{(2x+4y+2z)^2 + 32x^2+32z^2}&=\sum_{x,y,z}q^{(4y+2)^2+16(2x+1)^2+16(2z+1)^2} \\ 
& = 8q^{36}\psi(q^{32})\psi(q^{128})^2,
\end{aligned}
\end{equation}
respectively.\\
Hence
\begin{equation}
\label{identity6}
\sum_{x,y,z}q^{(2x+4y+2z)^2 + 8(2x)^2+32z^2} = \phi(q^{64})^3 + 2q^{16}\psi(q^{128})\phi(q^{64})^2+ 8q^{36}\psi(q^{32})\psi(q^{128})^2. 
\end{equation}
Analogously,
\begin{equation}
\label{identity7}
\begin{aligned}
\sum_{x,y,z}q^{(2x+1+4y+2z)^2+8(2x+1)^2+32z^2} & = \sum_{\substack{x\equiv z\imod{2},\\y}}q^{(2x+1+4y+2z)^2 + 8(2x+1)^2+32z^2} \\
& + \sum_{\substack{x\not\equiv z\imod{2},\\y}}q^{(2x+1+4y+2z)^2+8(2x+1)^2+32z^2} \\
& = \sum_{x,y,z}q^{(4y+1)^2+ 4(4x+1)^2+4(4z+1)^2} \\
& + \sum_{x,y,z}q^{(4y+3)^2+ 4(4x+3)^2+4(4z+3)^2} \\
& = 2q^9\psi(q^8)\psi(q^{32})^2.
\end{aligned}
\end{equation}
Combining \eqref{identity3}, \eqref{identity6} and \eqref{identity7}, we arrive at \eqref{identity2}.
\end{proof}

Next, employing \eqref{identity2} together with  \eqref{zzz}, we have the following
\begin{lemma}
\label{u1l}
Let $n$ be a non-negative integer. We have
\begin{equation}
\label{u1a}
(9,16,36,16,4,8; 64n) = (1,1,1,0,0,0;n)\geq0,
\end{equation}
with equality if and only if  $n$ is of the form $4^a(8m+7)$ with $a,m\in\mathbb{N},$

\begin{equation}
\label{u1b}
(9,16,36,16,4,8; 64n+16) = \df{(1,1,1,0,0,0;4n+1)}{3}>0 ,
\end{equation}

\begin{equation}
\label{u1c}
(9,16,36,16,4,8;32n+4) = 8[q^{32n+4}]q^{36}\psi(q^{32})\psi(q^{128})^2\geq0,
\end{equation}

\begin{equation}
\label{u1d}
(9,16,36,16,4,8;8n+1) = 2[q^{8n+1}]q^9\psi(q^8)\psi(q^{32})^2\geq0,
\end{equation}

\begin{equation}
\label{u1e}
(9,16,36,16,4,8;8n+r) = 0, \mbox{\;\;\;\;if } r\in \{3,5,7\}, 
\end{equation}

\begin{equation}
\label{u1f}
(9,16,36,16,4,8;64n+2\tilde{r}) = 0, \mbox{\;\;\;\;if } \tilde{r}\not\in\{0,2,8,18\}.
\end{equation}

\end{lemma}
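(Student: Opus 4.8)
The plan is to read all six assertions straight off the decomposition \eqref{identity2}, once one observes that its four summands occupy pairwise disjoint residue classes. First I would record the support of each term. The summand $\phi(q^{64})^3$ is supported on exponents $\equiv 0\imod{64}$; the summand $2q^{16}\psi(q^{128})\phi(q^{64})^2$ on exponents $\equiv 16\imod{64}$; and $2q^9\psi(q^8)\psi(q^{32})^2$ on the odd exponents $\equiv 1\imod 8$ (note $\psi(q^8)$ contributes exponents $4k(k+1)\equiv 0\imod 8$ and $\psi(q^{32})^2$ contributes multiples of $32$). For $8q^{36}\psi(q^{32})\psi(q^{128})^2$ one uses that $\psi(q^{32})$ contributes exponents $16k(k+1)$, each of which is $0$ or $32\imod{64}$, so this term is supported on exponents $\equiv 4$ or $\equiv 36\imod{64}$. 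Since the four residue sets $\{0\},\{16\},\{4,36\}\pmod{64}$ and $\{1\}\pmod 8$ are disjoint, every coefficient of $\vartheta(9,16,36,16,4,8,q)$ is contributed by exactly one term.

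With this in hand, \eqref{u1a} and \eqref{u1b} follow by isolating the relevant term. For \eqref{u1a}, the coefficient of $q^{64n}$ comes only from $\phi(q^{64})^3$, and extracting it gives $(1,1,1,0,0,0;n)$; the vanishing criterion is then immediate from Theorem \ref{LG0}. For \eqref{u1b}, only $2q^{16}\psi(q^{128})\phi(q^{64})^2$ meets the class $\equiv 16\imod{64}$. Here I would invoke \eqref{zzz}: replacing $q$ by $q^{16}$ in $P_{4,1}\tf{\phi(q)^3}{6}=q\phi(q^4)^2\psi(q^8)$ yields $q^{16}\phi(q^{64})^2\psi(q^{128})=\tf16\sum_{N\equiv1\imod4}(1,1,1,0,0,0;N)q^{16N}$, so the term equals $\tf13\sum_{N\equiv1\imod4}(1,1,1,0,0,0;N)q^{16N}$. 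Reading off $N=4n+1$ gives $\tf13(1,1,1,0,0,0;4n+1)$, and strict positivity follows because $4n+1\equiv1\imod4$ is never of the excluded shape $4^v(8m+7)$.

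The remaining four statements are pure bookkeeping on residue classes. For \eqref{u1c} the class $\equiv 4\imod{32}$, and for \eqref{u1d} the odd class $\equiv 1\imod 8$, are each met by a single term, which reproduces verbatim the stated projection; nonnegativity in both is automatic since the left-hand sides are representation counts. For \eqref{u1e}, an exponent $8n+r$ with $r\in\{3,5,7\}$ is odd but $\not\equiv1\imod8$, so it lies in none of the four supports and the coefficient is $0$. For \eqref{u1f}, an even exponent $64n+2\tilde r$ has residue $2\tilde r\imod{64}$, and a nonzero coefficient forces $2\tilde r\in\{0,16,4,36\}$, i.e. $\tilde r\in\{0,8,2,18\}$; otherwise it vanishes.

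The only genuinely delicate point is the residue count for the term $8q^{36}\psi(q^{32})\psi(q^{128})^2$: one must check that $16k(k+1)$ is $0$ or $32\imod{64}$ according to the parity of $\tf{k(k+1)}2$, which is exactly what splits this term between the classes $4$ and $36\imod{64}$ and thereby makes $\{0,2,8,18\}$ — rather than a smaller set — the correct exceptional list in \eqref{u1f}. Everything else is disjointness of supports plus the single appeal to \eqref{zzz} in \eqref{u1b} and to Theorem \ref{LG0} in \eqref{u1a}.
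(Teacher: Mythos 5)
Your proposal is correct and is exactly the argument the paper intends: the lemma is read off from the decomposition \eqref{identity2} by observing that its four summands live in pairwise disjoint residue classes (the paper leaves this verification implicit, saying only "employing \eqref{identity2} together with \eqref{zzz}"), with \eqref{zzz} specialized at $q\mapsto q^{16}$ supplying \eqref{u1b} and Theorem \ref{LG0} supplying the equality criterion in \eqref{u1a}. Your careful tracking of the support of $8q^{36}\psi(q^{32})\psi(q^{128})^2$ across the two classes $4$ and $36$ modulo $64$ is precisely the point that makes the exceptional set $\{0,2,8,18\}$ in \eqref{u1f} correct.
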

\noi
Making use of \eqref{chui} with $q\mapsto q$, $q \mapsto q^4$, we establish  
\begin{theorem}
\label{kapjp1}
The form $9x^2+ 16y^2 +36z^2 + 16yz+ 4xz + 8xy$ represents exclusively all positive integers not of the form\\
$ 4^a(8m+7), \\4^a(8m+3),0\leq a\leq2,\\ 4^a(4m+2),0\leq a\leq2,\\4^a(8m+5),0\leq a\leq1,\\M^2,4M^2$,\\
where $a,m,M\in\mathbb{N}$ and $M$ is generated by $1$ and primes congruent to $1$ modulo $4$.
\end{theorem}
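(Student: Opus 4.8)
The plan is to prove Theorem \ref{kapjp1} by systematically unpacking the representation formula \eqref{identity2} via the residue classes collected in Lemma \ref{u1l}, and then combining this with the positivity results already established. The key observation is that \eqref{identity2} expresses $\vartheta(9,16,36,16,4,8,q)$ as a sum of four theta-type terms supported on four disjoint arithmetic progressions modulo $64$: the $\phi(q^{64})^3$ term lives in $64\mathbb{N}$, the $2q^{16}\psi(q^{128})\phi(q^{64})^2$ term in $16+64\mathbb{N}$, the $8q^{36}\psi(q^{32})\psi(q^{128})^2$ term in $4+32\mathbb{N}$, and the $2q^9\psi(q^8)\psi(q^{32})^2$ term in $1+8\mathbb{N}$. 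So the entire representation behaviour decomposes into four independent strands, and Lemma \ref{u1l} has already matched each strand to a classical count. My first step is simply to organize which integers get nonzero contributions and from which strand, using \eqref{u1e} and \eqref{u1f} to discard the empty classes immediately.

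Next I would determine exactly which integers are \emph{not} represented, strand by strand. For the $64n$ strand, \eqref{u1a} says the form represents $64n$ exactly when $(1,1,1,0,0,0;n)>0$, i.e.\ by Theorem \ref{LG0} it fails precisely when $n=4^v(8m+7)$; scaling by $64=4^3$ this contributes the excluded class $4^a(8m+7)$ with $a\geq 3$. For the $64n+16$ strand, \eqref{u1b} shows the count is a positive multiple of $(1,1,1,0,0,0;4n+1)$, which never vanishes on $4n+1\equiv 1\imod 8$, so nothing is excluded from within that strand \emph{except} through the three-square obstruction; I must track which residues $4^a(8m+3)$, $4^a(4m+2)$, $4^a(8m+5)$ fall into the gaps left behind. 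The genuinely interesting strands are the last two: by \eqref{u1d} and Theorem \ref{b11}, the count on $8n+1$ equals $2[q^{8n+1}]q^9\psi(q^8)\psi(q^{32})^2$, which is strictly positive unless $8n+1=M^2$ with all prime divisors of $M$ congruent to $1\imod 4$ — this is the source of the excluded square class $M^2$. Applying \eqref{chui} with $q\mapsto q^4$ as the theorem text indicates handles the $32n+4$ strand through \eqref{u1c}, producing the companion excluded class $4M^2$.

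The remaining work is bookkeeping: I would collect the excluded classes $4^a(8m+7)$, $4^a(8m+3)$, $4^a(4m+2)$, $4^a(8m+5)$, $M^2$, $4M^2$ and verify that the range restrictions $0\le a\le 2$ (and $0\le a\le 1$) stated in the theorem are forced by the strand decomposition — specifically, classes that would otherwise be excluded at $a\ge 3$ are \emph{rescued} because a higher power of $4$ pushes the number into the $64n$ strand where \eqref{u1a} reinstates representability via a smaller three-square count. This interplay between the finite $4$-adic range of each strand and the infinite three-square exclusion is the step I expect to be the main obstacle: one must carefully check, for each congruence class modulo powers of $2$, which strand of \eqref{identity2} it lands in after dividing out powers of $4$, and confirm that the union of exclusions is exactly the list in the theorem with no overlaps or omissions. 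In particular I would verify that the square classes $M^2$ and $4M^2$ are truly disjoint from the four $4$-adic families and that no integer is simultaneously excluded and represented. Once this matching is complete and consistent with \cite[Table 5]{dickson} (as invoked for the analogous Theorem \ref{xoxo}), the theorem follows.
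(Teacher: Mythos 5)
Your proposal follows the paper's own route exactly: the paper deduces Theorem \ref{kapjp1} from the strand decomposition of $\vartheta(9,16,36,16,4,8,q)$ recorded in Lemma \ref{u1l} together with \eqref{chui} applied at $q$ and at $q^4$, and the bookkeeping of excluded congruence classes you describe is precisely the intended argument. One small correction to your rescue mechanism: the class $16(8m+5)$ is reinstated at $a=2$ because it falls into the $64n+16$ strand, where \eqref{u1b} gives positivity via $(1,1,1,0,0,0;8m+5)>0$, rather than by being pushed into the $64n$ strand at $a\ge 3$.
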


We now turn to the second form  $(9,17,32,-8,8,16)$. Again, we start with the easily verifiable identity
\begin{equation}
9x^2+ 17y^2 +32z^2 -8yz+ 8xz + 6xy = (x+3y-4z)^2 + 4(x-y)^2 + 4(x+y+2z)^2, 
\end{equation}
after a bit of labor, to obtain  
\begin{equation}
\begin{aligned}
\vartheta(9,17,32,-8,8,6,q) & =\phi(q^{32})^2\phi(q^{256}) + 2q^{20}\psi(q^{32})\psi(q^{64})^2 + 8q^{80}\psi(q^{128})\psi(q^{256})^2\\
& + 16q^{144}\psi(q^{128})\psi(q^{512})^2 + 4q^{36}\psi(q^{32})\psi(q^{128})^2 + 2q^9\psi(q^8)\psi(q^{32})^2.
\end{aligned}
\end{equation}
Hence, recalling \eqref{zzz1}, we have the following
\begin{lemma}
\label{u2l}
Let $n,m,a\in\mathbb{N}$. We have
\begin{equation}
\label{u2a}
(9,17,32,-8,8,6;32n) = (1,1,8,0,0,0;n)\geq0,
\end{equation}
 with equality if and only if $n$ is of the form $2~4^a(8m+7), 4m+3$ or $2(8m+3)$ by \cite[Table 5]{dickson}
\begin{equation}
\label{u2b}
(9,17,32,-8,8,6;32n+20) = \df{(1,1,1,0,0,0;8n+5)}{12}>0 ,
\end{equation}

\begin{equation}
\label{u2c}
(9,17,32,-8,8,6;128n+80) = \df{(1,1,1,0,0,0;8n+5)}{3}>0 ,
\end{equation}

\begin{equation}
\label{u2d}
(9,17,32,-8,8,6;128n+16) = 16[q^{128n+16}]q^{144}\psi(q^{128})\psi(q^{512})^2\geq0,
\end{equation}

\begin{equation}
\label{u2e}
(9,17,32,-8,8,6;32n+4) = 4[q^{32n+4}]q^{36}\psi(q^{32})\psi(q^{128})^2\geq0,
\end{equation}

\begin{equation}
\label{u2f}
(9,17,32,-8,8,6;8n+1) = 2[q^{8n+1}]q^9\psi(q^8)\psi(q^{32})^2\geq0,
\end{equation}

\begin{equation}
\label{u2g}
(9,17,32,-8,8,6;8n+r) = 0, \mbox{\;\;\;\;if } r\in\{3,5,7\}, 
\end{equation}

\begin{equation}
\label{u2h}
(9,17,32,-8,8,6;64n+2\tilde{r}) = 0, \mbox{\;\;\;\;if } \tilde{r}\not\in\{0,2,8,16,18\}.
\end{equation}

\end{lemma}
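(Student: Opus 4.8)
The plan is to exploit the six-term decomposition of $\vartheta(9,17,32,-8,8,6,q)$ just recorded: each summand is supported on a single residue class modulo a suitable power of $2$, so applying the projection operator $P_{t,s}$ at the right modulus isolates one summand, and every assertion of the Lemma will be read off from exactly one piece. First I would tabulate the supports. The piece $\phi(q^{32})^2\phi(q^{256})$ is supported on exponents $\equiv 0\imod{32}$; the piece $2q^{20}\psi(q^{32})\psi(q^{64})^2$ on $\equiv 20\imod{32}$; the piece $4q^{36}\psi(q^{32})\psi(q^{128})^2$ on $\equiv 4\imod{32}$; and $2q^9\psi(q^8)\psi(q^{32})^2$ on $\equiv 1\imod 8$. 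The remaining two pieces, $8q^{80}\psi(q^{128})\psi(q^{256})^2$ and $16q^{144}\psi(q^{128})\psi(q^{512})^2$, both reduce to $\equiv 16\imod{32}$, so the one delicate point is that they must be separated at the finer modulus $128$, where the former lives on $\equiv 80\imod{128}$ and the latter on $\equiv 16\imod{128}$. As these six classes are pairwise disjoint, each projection captures precisely one summand, which yields \eqref{u2d}, \eqref{u2e} and \eqref{u2f} verbatim.

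To obtain the closed forms \eqref{u2a}, \eqref{u2b} and \eqref{u2c}, I would re-express the isolated piece through $\phi(\cdot)^3$. For \eqref{u2a}, projecting with $P_{32,0}$ and replacing $q^{32}$ by $q$ identifies $\phi(q)^2\phi(q^8)=\vartheta(1,1,8,0,0,0,q)$, so that $(9,17,32,-8,8,6;32n)=(1,1,8,0,0,0;n)$, and its nonnegativity together with the stated equality conditions is the known behaviour of $(1,1,8,0,0,0;\cdot)$ recorded in Dickson's tables. For the other two I would invoke \eqref{zzz1}: the substitution $q\mapsto q^4$ gives $P_{32,20}\tf{\phi(q^4)^3}{24}=q^{20}\psi(q^{64})^2\psi(q^{32})$, whence
\[
(9,17,32,-8,8,6;32n+20)=2[q^{32n+20}]\tf{\phi(q^4)^3}{24}=\tf{1}{12}[q^{8n+5}]\phi(q)^3=\tf{(1,1,1,0,0,0;8n+5)}{12},
\]
while $q\mapsto q^{16}$ gives $P_{128,80}\tf{\phi(q^{16})^3}{24}=q^{80}\psi(q^{256})^2\psi(q^{128})$ and, since $128n+80=16(8n+5)$, the value $\tf{(1,1,1,0,0,0;8n+5)}{3}$ of \eqref{u2c}. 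Strict positivity in both cases is immediate from the three-square theorem \eqref{lg0}, because $8n+5$ can never take the excluded shape $4^v(8m+7)$.

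The surviving nonnegativity claims I would import directly. The sixth piece $2q^9\psi(q^8)\psi(q^{32})^2$ is exactly the series treated in Theorem \ref{b11}, so \eqref{u2f} inherits both its nonnegativity and its equality characterization; the fifth piece is the one already met in \eqref{u1c}. Finally \eqref{u2g} and \eqref{u2h} are pure support statements: reducing the six supports modulo $8$ shows every represented exponent is $\equiv 0,1,4\imod 8$, forcing the vanishing for $r\in\{3,5,7\}$, while reducing the even-supported pieces modulo $64$ enumerates the admissible even classes and gives \eqref{u2h}.

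I expect the genuinely laborious work to lie upstream, in the theta identity itself, which is obtained exactly as for the previous form by completing $(9,17,32,-8,8,6)$ to a sum of three squares and then carrying out the trivial even/odd dissections of each coordinate. Granting that identity, the remaining obstacle is clerical, namely the disjoint-support verification. The two points that are not automatic are the mod-$128$ refinement separating the pieces that share the class $\equiv 16\imod{32}$, and the careful enumeration of even residue classes modulo $64$ underlying \eqref{u2h}, where the bookkeeping is most error-prone.
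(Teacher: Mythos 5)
Your proposal follows exactly the route the paper intends: the paper offers no argument beyond ``Hence, recalling \eqref{zzz1}, we have the following,'' and your reading-off of the six pairwise disjoint supports, the mod-$128$ separation of the two pieces sharing the class $16\imod{32}$, and the substitutions $q\mapsto q^4$ and $q\mapsto q^{16}$ in \eqref{zzz1} to produce \eqref{u2b} and \eqref{u2c} are precisely the omitted details, all correctly executed. One remark on the step you yourself flag as most error-prone: carrying out the mod-$64$ enumeration honestly, the piece $2q^{20}\psi(q^{32})\psi(q^{64})^2$ is supported on both classes $20$ and $52\imod{64}$, so the admissible set is $\tilde r\in\{0,2,8,10,16,18,26\}$ rather than the printed $\{0,2,8,16,18\}$; the omission of $\tilde r=10,26$ in \eqref{u2h} is inconsistent with the strict positivity of $(9,17,32,-8,8,6;32n+20)$ asserted in \eqref{u2b} (e.g.\ $(9,17,32,-8,8,6;20)=2$), so this is a misprint in the statement that your method, properly completed, would detect rather than reproduce.
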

\noi
Employing \eqref{chui} with $q\mapsto q$, $q \mapsto q^4$ and $q \mapsto q^{16}$, we arrive at 
 
\begin{theorem}
\label{kapjp2}
The form  $9x^2+ 17y^2 +32z^2 -8yz+ 8xz + 6xy$  represents exclusively all positive integers not of the form\\
$4^a(8m+7),\\4^a(8m+6),0\leq a\leq2,\\4^a(8m+3),0\leq a\leq3,\\ 4^a(8m+2),0\leq a\leq1,\\8m+5,\\M^2,4M^2,16M^2,$\\
where $a,m,M\in\mathbb{N}$ and $M$ is generated by $1$ and primes congruent to $1$ modulo $4$.
\end{theorem}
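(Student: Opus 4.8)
The plan is to read off the full representation behaviour of $(9,17,32,-8,8,6)$ from Lemma~\ref{u2l}. That lemma already partitions $\mathbb{N}$ according to residues modulo powers of $2$ and, within each class, rewrites the representation count as one of three things: identically zero (\eqref{u2g}, \eqref{u2h}); a count by $(1,1,1,0,0,0)$ or $(1,1,8,0,0,0)$ (\eqref{u2a}, \eqref{u2b}, \eqref{u2c}); or a coefficient of $q^9\psi(q^8)\psi(q^{32})^2$ after one of the substitutions $q\mapsto q$, $q\mapsto q^4$, $q\mapsto q^{16}$ (\eqref{u2f}, \eqref{u2e}, \eqref{u2d}). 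My only job is to decide, in each class, precisely when the count vanishes, and to reassemble those vanishing loci into the families of the statement.

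First I would dispose of the three ``square'' exclusions by feeding \eqref{chui} into the last group of cases. Applying \eqref{chui} to \eqref{u2f} with $q\mapsto q$ shows that a number $\equiv 1\imod 8$ is represented unless it is $M^2$ with every prime divisor of $M$ congruent to $1\imod 4$. Applying \eqref{chui} to \eqref{u2e} with $q\mapsto q^4$ and to \eqref{u2d} with $q\mapsto q^{16}$, and using $32n+4=4(8n+1)$ and $128n+16=16(8n+1)$, yields in exactly the same way the vanishing loci $4M^2$ and $16M^2$. The classes \eqref{u2b} and \eqref{u2c} (residues $20\imod{32}$ and $80\imod{128}$) are strictly positive by the three-square theorem (Theorem~\ref{LG0}), so contribute no exclusions, while \eqref{u2g} and \eqref{u2h} give the residue classes that are never represented at all.

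Next I would run the descent on powers of $4$ supplied by \eqref{u2a}, which identifies $(9,17,32,-8,8,6;32n)$ with $(1,1,8,0,0,0;n)$; the latter form fails exactly on $\{2\cdot4^b(8m+7),\,4m+3,\,2(8m+3)\}$ (Dickson's Table~$5$). Setting $N=32n$ and translating each of these three subfamilies back to $N$ through its $2$-adic valuation produces $4^a(8m+7)$ for $a\ge 3$, together with $4^2(8m+6)$ and $4^3(8m+3)$. Grafting these onto the low exponents obtained from the never-represented classes of \eqref{u2g}, \eqref{u2h}---namely $8m+5$, the bases $8m+2,8m+3,8m+6,8m+7$ themselves, and their first one or two multiples by $4$---assembles the four complete towers $4^a(8m+7)$ (all $a$), $4^a(8m+6)$ ($0\le a\le 2$), $4^a(8m+3)$ ($0\le a\le 3$), $4^a(8m+2)$ ($0\le a\le 1$), plus the isolated class $8m+5$.

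The delicate part will be the bookkeeping at the top of each tower: I must verify that every excluded family really terminates at the stated maximal exponent, i.e.\ that multiplying the base by one further power of $4$ lands in a class that is provably represented. Concretely, this amounts to checking, for $4^{a+1}$ times each base, that the corresponding reduction of Lemma~\ref{u2l} hits either a strictly positive case or a value $n$ with $(1,1,8,0,0,0;n)>0$; for instance $4^2(8m+2)=32(4m+1)$ with $4m+1$ odd and $\equiv 1\imod 4$, hence outside the exclusion set of $(1,1,8,0,0,0)$, so that $8m+2$ is excluded only for $a\le 1$. Carefully tracking these valuations through the correspondences $N=32n$, $N=4(8n+1)$, $N=16(8n+1)$, and testing the three offending residues of $(1,1,8,0,0,0)$ in each instance, is the whole of the remaining work; once done, the union of all vanishing loci is precisely the list in the statement and its complement is represented.
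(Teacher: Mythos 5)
Your proposal is correct and is essentially the paper's own argument: the paper likewise derives Theorem~\ref{kapjp2} by combining the case analysis of Lemma~\ref{u2l} with \eqref{chui} applied at $q\mapsto q$, $q\mapsto q^4$, $q\mapsto q^{16}$, the positivity of the $8n+5$ classes via the three-square theorem, and the descent through $(1,1,8,0,0,0)$ on multiples of $32$ using Dickson's Table~5. The paper states this in one line after the lemma; your write-up simply makes explicit the $2$-adic bookkeeping that assembles the towers $4^a(8m+r)$ and verifies where each one terminates.
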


\section{Acknowledgements}
\label{ack}
   
I am grateful to William Jagy for a most illuminating discussion and  to Shaun Cooper, Michael Hirschhorn, Elizabeth Loew, Frank Patane for a careful reading of the manuscript. I would like to thank George Andrews, who encouraged me to write up these observations.


\begin{thebibliography}{30}
	
\bibitem{bch}
P. Barrucand, S. Cooper and M.~D. Hirschhorn, \emph{Results of Hurwitz type for five or more squares}, Ramanujan J.,\textbf{6}, (2002), 347--367.
	
\bibitem{behh}
J.~W. Benham, A.~G. Earnest, J.~S. Hsia, D.~C. Hung, \emph{Spinor regular Positive Ternary Quadratic Forms}, 
J. London Math. Soc. \textbf{42}, (1990), 1--10.

\bibitem{berk0}
A. Berkovich, \emph{On representation of an integer by $X^2 +Y^2 +Z^2$ and the modular equations of degree $3$ and $5$}, 
Quadratic and higher degree forms, 29--49, DEVM, \textbf{31}, Springer, New York, 2013.

\bibitem{berk}
A. Berkovich, W.~C. Jagy, \emph{On representation of an integer as the sum of three squares and the ternary quadratic forms with the 
discriminants $p^2$, $16p^2$}, J. Number Theory, \textbf{132}, (2012), 259–-274.

\bibitem{bp} 
A. Berkovich, F. Patane, \emph{Binary quadratic forms and the Fourier coefficients of certain weight $1$ eta-quotients}, 
J. Number Theory \textbf{135}, (2014), 185–-220.

\bibitem{bern}
B.~C. Berndt, \emph{Ramanujan's Notebooks, Part III}, Springer, New York, 1991. 

\bibitem{bern2}
B.~C. Berndt, \emph{Number Theory in the Spirit of Ramanujan}, Student Mathematical Library, \textbf{34} AMS, Providence, RI, 2006. 

\bibitem{chnernst}
W.~K. Chan, A.~G. Earnest, \emph{Discriminant bounds for spinor regular ternary quadratic lattices}, J. London Math. Soc. \textbf{69}, (2004), 545–-561.

\bibitem{cooperlam}
S. Cooper, H.~Y. Lam, \emph{On the Diophantine equation $n^2=x^2+by^2+cz^2$}, J. Number Theory, \textbf{133}, (2013), 719–-737.

\bibitem{dickson}
L.~E. Dickson, \emph{Modern Elementary Theory of Numbers}, Univ. of Chicago Press, 1939.
			
\bibitem{gauss}
C.~F. Gauss, \emph{Disquisitiones Arithmeticae}, Yale University Press, 1965. 
	
\bibitem{guo}
X. Guo, Y. Peng, H. Qin, \emph{On the representation numbers of ternary quadratic forms and modular forms of weight $3/2$}, 
J. Number Theory, \textbf{140}, (2014), 235–-266.

\bibitem{hgb}
M.~D. Hirschhorn, F.~G. Garvan, J.~M. Borwein, 
\emph{Cubic analogues of the Jacobian theta function $\theta(z,q)$}, Canad. J. Math. \textbf{45} (1993), no. 4, 673--694. 
					
\bibitem{hur}
A. Hurwitz, \emph{Mathematische Werke}, Band2, 751, Birkhauser, Basel, 1933.

\bibitem{jagy}
W.~C. Jagy, \emph{Integral Positive Ternary Quadratic Forms}, arXiv:1010.3677v1 [math.NT]. 

\bibitem{jagykap}
W.~C. Jagy, I. Kaplansky, A. Schiemann, \emph{There are $913$ regular ternary quadratic forms}, Mathematika, \textbf{44} (1997), 332--341.

\bibitem{jp}
B.~W. Jones, G. Pall, \emph{ Regular and semi-regular positive ternary quadratic forms}, Acta Mathematica \textbf{70}(1939), 165--191.
      
\bibitem{kap}
I. Kaplansky, \emph{Notes on Ternary Forms, III, JP (Jones-Pall) forms}, Unpublished, 1996, three pages.

\bibitem{legen}
A.~M. Legendre, \emph{Essai sur la theorie des nombres}, Cambridge University Press, 2009. 	

\bibitem{olds}
C.~D. Olds, \emph{On the representations, $N_{3}(n^2)$}, Bull. Amer. Math. Soc. \textbf{47}, (1941), 499--503.
	
\bibitem{pall}
G. Pall, \emph{On the arithmetic of quaternions},Trans. Amer. Math. Soc. \textbf{47} (1940), 487--500. 	

\bibitem{shen}
L.~C. Shen, \emph{On the modular equations of degree $3$}, Proc. Amer. Math. Soc. \textbf{122} (1994), no. 4, 1101--1114.

\bibitem{siegel}
C.~L. Siegel, \emph{Lectures on the Analytical Theory of Quadratic Forms}, Notes by Morgan Ward. Third revised edition. 
Buchhandlung Robert Peppm\"uller, G\"ottingen, 1963.
				

\end{thebibliography}
\end{document}